\newcommandx{\todoin}[2][1=]{\todo[inline, caption={todo}, #1]{%
    \begin{minipage}{\textwidth-20pt}#2\end{minipage}}}
\newcommandx{\question}[2][1=]{\todo[linecolor=olive,backgroundcolor=olive!25,bordercolor=olive,#1]{#2}}
\newcommandx{\questionin}[2][1=]{\question[inline, caption={question}, #1]{%
    \begin{minipage}{\textwidth-40pt}{\normalsize #2}\end{minipage}}}
\newcommand{\vcxymatrix}[1]{\vcenter{\xymatrix{#1}}}
\newcounter{proof}
\newenvironment{myproof}%
{\stepcounter{proof}\begin{proof}}%
{\end{proof}}%
\newcounter{proofstep}[proof]
\newenvironment{proofstep}[1][]%
{\refstepcounter{proofstep}\bigskip\par\noindent%
  \ifthenelse{\isempty{#1}}
    {\textsc{Step \theproofstep. }}
    {\textsc{#1.}}
  \noindent}%
{\par}%
\newcounter{proofcase}[proof]
{\refstepcounter{proofcase}\bigskip\par\noindent%
  \ifthenelse{\isempty{#1}}
    {\textsc{Case \theproofcase. }}
    {\textsc{#1.}}
  \noindent}%
{\par}%
\theoremstyle{plain}
\newtheorem{thm}{Theorem}[section]
\newtheorem*{thm*}{Theorem}
\newtheorem{lem}[thm]{Lemma}
\theoremstyle{definition}
\theoremstyle{remark}
\newtheorem{rem}[thm]{Remark}
\numberwithin{equation}{section}
\newcommandx{\textref}[2][1=]{\hyperref[#2]{#1\ref*{#2}}}
\newcommandx{\textrefp}[2][1=]{(\hyperref[#2]{#1\ref*{#2}})}
\newcommand{\charfun}{\scalebox{1.0}{\ensuremath{\mathbbm 1}}}
\DeclareMathOperator*{\wslim}{w^*\negthinspace\lim}
\DeclareMathOperator{\Id}{Id}
\DeclareMathOperator{\spn}{span}
\begin{document}

\title{Subsymmetric weak$^*$ Schauder bases and factorization of the identity}

\author[R.~Lechner]{Richard Lechner}

\address{Richard Lechner, Institute of Analysis, Johannes
  Kepler University Linz, Altenberger Strasse 69, A-4040 Linz, Austria}
\email{Richard.Lechner@jku.at}

\date{\today}

\subjclass[2010]{%
  46B25,
  46B26
}

\keywords{Primary, factorization, subsymmetric, weak$^*$~Schauder basis}

\thanks{Supported by the Austrian Science Foundation (FWF) Pr.Nr. P28352}

\begin{abstract}
  Let $X^*$ denote a Banach space with a subsymmetric weak$^*$ Schauder basis satisfying
  condition~\eqref{eq:condition-c}.  We show that for any operator $T : X^*\to X^*$, either $T(X^*)$
  or $(\Id_{X^*}-T)(X^*)$ contains a subspace that is isomorphic to $X^*$ and complemented in $X^*$.
  Moreover, we prove that $\ell^p(X^*)$, $1\leq p \leq \infty$ is primary.
\end{abstract}

\maketitle


\makeatletter
\providecommand\@dotsep{5}
\def\listtodoname{List of Todos}
\def\listoftodos{\@starttoc{tdo}\listtodoname}
\makeatother

\section{Introduction and results}
\label{sec:main-results}

\noindent
In~\cite{casazza:lin:1974}, Casazza and Lin showed that for any bounded linear projection $Q$ on a
Banach space $S$ with a subsymmetric basis, either $Q(S)$ or $(\Id-Q)(S)$ contains a subspace which
is isomorphic to $S$ and complemented in $S$.  Our first main result Theorem~\ref{thm:results:subsymmetric}
extends their result to Banach spaces having a subsymmetric weak$^*$~Schauder basis which satisfies
the subsequent condition~\eqref{eq:condition-c}.

\subsection*{Condition~\eqref{eq:condition-c}}
\label{sec:cond-eqref-c}
Let $U$ denote a Banach space with normalized \emph{unconditional} basis $(e_j)_{j=1}^\infty$, whose
normalized associated coordinate functionals $(e_j^*)_{j=1}^\infty$ form a \emph{unconditional}
weak$^*$~Schauder basis of $U^*$.  Given $\mathcal{A}\subset\mathbb{N}$, we define the bounded
projection $P_{\mathcal{A}} : U^*\to U^*$ by
\begin{equation}\label{eq:definition-P_A^*}
  P_{\mathcal{A}}\Big( \sum_{j=1}^\infty a_j e_j^* \Big)
  = \sum_{j\in \mathcal{A}}^\infty a_j e_j^*,
\end{equation}
where the above series converge in the weak$^*$ topology of $U^*$.

We say that $U^*$ together with its unconditional weak$^*$~Schauder basis $(e_j^*)_{j=1}^\infty$
satisfies \emph{condition~\eqref{eq:condition-c}}, if for every infinite set
$\Lambda\subset\mathbb{N}$ and every $\theta > 0$, we can find a sequence
$(\mathcal{A}_j)_{j=1}^\infty$ of pairwise disjoint and infinite subsets of $\Lambda$, such that for
all $(x_j^*)_{j=1}^\infty\subset U^*$ with $\|x_j^*\|_{U^*}\leq 1$ there exists a sequence of
scalars $(a_j)_{j=1}^\infty\in \ell^1$ with $\|(a_j)_{j=1}^\infty\|_{\ell^1} = 1$ such that
\begin{equation}\label{eq:condition-c}
  \Big\| \sum_{j=1}^\infty a_j P_{\mathcal{A}_j} x_j^* \Big\|_{U^*}
  \leq \theta.
  \tag{C}
\end{equation}
The above series converges in the weak$^*$ topology of $U^*$.

\begin{thm}\label{thm:results:subsymmetric}
  Let $S^*$ denote a Banach space with a subsymmetric weak$^*$~Schauder basis satisfying
  condition~\eqref{eq:condition-c}.  Then for any given bounded operator $T : S^*\to S^*$, there
  exist operators $M, N : S^*\to S^*$ such that for $H=T$ or $H = \Id_{S^*}-T$, the diagram
  \begin{equation}\label{eq:thm:results:subsymmetric}
    \vcxymatrix{S^* \ar[r]^{\Id_{S^*}} \ar[d]_{M} & S^*\\
      S^* \ar[r]_H & S^* \ar[u]_{N}}
    \qquad \|M\| \|N\| \leq 48 K_u^7 K_s^4
  \end{equation}
  is commutative.  Consequently, $H(S^*)$ contains a subspace which is isomorphic to $S^*$ and
  complemented in $S^*$.
\end{thm}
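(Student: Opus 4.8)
The plan is to transplant the Casazza--Lin scheme of \cite{casazza:lin:1974} to the weak$^*$ setting, with condition~\eqref{eq:condition-c} replacing the norm gliding--hump argument (unavailable here because $(e_j^*)_j$ is only a \emph{weak$^*$} Schauder basis of $S^*$). Write $(e_j^*)_j$ for the given subsymmetric weak$^*$ Schauder basis, $(e_j)_j\subset S$ for its predual basis, $K_u$ for its unconditionality constant (so each coordinate projection $P_{\mathcal A}$ is bounded by $K_u$ and weak$^*$--weak$^*$ continuous, being the adjoint of a coordinate projection on $S$) and $K_s$ for its subsymmetry constant. Two observations are used throughout: for any infinite $\Lambda\subset\mathbb N$ the range of $P_\Lambda$ is complemented in $S^*$ and isomorphic to $S^*$ (with constant controlled by $K_u,K_s$), so it suffices to factor $\Id$ through $P_\Lambda H|_{\operatorname{ran}P_\Lambda}$; and condition~\eqref{eq:condition-c} is incompatible with $(e_j^*)_j$ being equivalent to the unit vector basis of $\ell^1$ (disjoint blocks of the $\ell^1$-basis are again $\ell^1$-like), whence $(e_j^*)_j$ is weakly null, a subsymmetric basis not equivalent to the $\ell^1$-basis being weakly null.

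\emph{Step 1 (diagonal lower bound).} From $\langle Te_j^*,e_j\rangle+\langle(\Id_{S^*}-T)e_j^*,e_j\rangle=\langle e_j^*,e_j\rangle=1$, for each $j$ one summand has modulus $\ge\tfrac12$; pigeonholing gives an infinite $\Lambda\subset\mathbb N$ and $H\in\{T,\Id_{S^*}-T\}$ with $|\langle He_j^*,e_j\rangle|\ge\tfrac12$ for $j\in\Lambda$. By the reduction above I take $\Lambda=\mathbb N$, so $d_j:=\langle He_j^*,e_j\rangle$ satisfies $\tfrac12\le|d_j|\le\|H\|$; passing to a further subsequence I also make $(d_j)$ convergent. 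Decompose $He_j^*=d_je_j^*+r_j^*$ with $r_j^*:=(\Id_{S^*}-P_{\{j\}})He_j^*$, so $\|r_j^*\|\le2\|H\|$.

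\emph{Step 2 (weak$^*$ gliding hump via \eqref{eq:condition-c} --- the crux).} The aim is a normalized block sequence $z_i^*=\sum_{j\in G_i}\lambda_je_j^*$ ($G_1<G_2<\cdots$ finite) and an increasing interval sequence $I_1<I_2<\cdots$ with $G_i\subset I_i$, such that $w_i^*:=P_{I_i}(Hz_i^*)$ are disjointly supported, $\sum_i\|Hz_i^*-w_i^*\|$ is arbitrarily small, and $\|w_i^*\|$ is comparable to $\|z_i^*\|=1$ --- the latter because $P_{I_i}$ retains the whole diagonal term $\sum_{j\in G_i}\lambda_jd_je_j^*$, whose norm is $\ge\tfrac1{2K_u}$ by unconditionality. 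The error $Hz_i^*-w_i^*=(\Id_{S^*}-P_{I_i})\sum_{j\in G_i}\lambda_jr_j^*$ splits into a ``head'' $P_{[1,\min I_i)}(\cdot)$ and a ``tail'' $P_{(\max I_i,\infty)}(\cdot)$. The head is harmless: weak nullity of $(e_j^*)_j$ gives $\langle He_j^*,e_k\rangle=\langle e_j^*,H^*e_k\rangle\to0$ for each fixed $k$, so a sufficiently lacunary choice of indices makes the finitely many low coordinates of each $r_j^*$ negligible. The tail is the genuine obstruction: for a single $j$ the quantity $\|P_{(m,\infty)}r_j^*\|$ need not tend to $0$ as $m\to\infty$ ($(e_j^*)_j$ being no norm basis). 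Here \eqref{eq:condition-c} enters: applied to a tail of $\mathbb N$ and a small $\theta$ it produces pairwise disjoint infinite receptacle sets $(\mathcal A_k)_k$; feeding into \eqref{eq:condition-c} the normalized off-diagonal parts of basis vectors chosen from these receptacles returns $\ell^1$-normalized weights for which the corresponding convex combination of the $P_{\mathcal A_k}$-projections of those parts has norm $\le\theta$. Choosing $G_i$ (the diagonal indices) and $I_i$ in accordance with these data --- the receptacles used at stage $i$ placed in $I_i^c$, the selected diagonal indices kept in $I_i$ --- and iterating over disjoint tails yields $(z_i^*),(I_i)$ and a summable error. The main difficulty I foresee lies exactly here: reconciling the quantifier order of \eqref{eq:condition-c} (receptacles fixed before the vectors) with the inductive block construction, and arranging the normalization so that the smallness furnished by \eqref{eq:condition-c} survives it; this is where all the hypotheses are genuinely used.

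\emph{Step 3 (small perturbations, complementation, and assembling $M,N$).} By unconditionality and subsymmetry the normalized disjointly supported $(w_i^*)$ is $C(K_u,K_s)$-equivalent to $(e_i^*)$; its weak$^*$-closed linear span $W^*$ is isomorphic to $S^*$ and complemented in $S^*$ (a projection onto $W^*$ is built from the $P_{I_i}$ together with the weak$^*$--weak$^*$-continuous block operator $e_i^*\mapsto w_i^*$ and its left inverse, bounded because $\|w_i^*\|\gtrsim1$; one must use weak$^*$-closed spans, since finitely supported blocks may span a proper subspace, e.g. $c_0\subsetneq\ell^\infty$), and likewise for $(z_i^*)$ with span $Z^*$. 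The principle of small perturbations, applied with the summable error of Step~2, transfers this to $(Hz_i^*)$: $H$ maps $Z^*$ isomorphically onto the weak$^*$-closed span of $(Hz_i^*)$, which is still isomorphic to $S^*$ and complemented, via some projection $Q$. Setting $M:=\iota\circ\phi$ with $\phi:S^*\to Z^*$ an isomorphism and $\iota:Z^*\hookrightarrow S^*$, and $N:=\phi^{-1}\circ(H|_{Z^*})^{-1}\circ Q$, we obtain $NHM=\Id_{S^*}$, so \eqref{eq:thm:results:subsymmetric} commutes; moreover $HMN$ is a bounded projection of $S^*$ onto $HM(S^*)\subset H(S^*)$, a subspace isomorphic to $S^*$ --- the last assertion. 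Tracking the constants through Steps~1--3 (the pigeonhole and restriction, the equivalence of $(w_i^*)$ with $(e_i^*)$, the factor $2$ from the diagonal bound, and an arbitrarily cheap perturbation) yields $\|M\|\|N\|\le48K_u^7K_s^4$.
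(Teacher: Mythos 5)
The gap is exactly where you say it is, and it is a genuine one: Step~2 is the heart of the theorem and your proposal describes the desired output (blocks $z_i^*$, intervals $I_i$, summable tail errors) without providing the mechanism that produces it. The quantifier mismatch you flag --- condition~\eqref{eq:condition-c} fixes the receptacles $(\mathcal A_j)$ \emph{before} the vectors, while your induction needs to kill the tail of $Hz_i^*$ for a $z_i^*$ chosen \emph{afterwards} --- is resolved in the paper by an intermediate lemma (Lemma~\ref{lem:future}) which you should isolate and prove first: for every $\varphi\in S^{**}$, every infinite $\Lambda\subset\mathbb N$ and every $\eta>0$ there is an infinite $\mathcal A\subset\Lambda$ with $\sup_{\|x^*\|\le1}|\langle\varphi,P_{\mathcal A}x^*\rangle|\le\eta$. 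Its proof is by contradiction, which is what inverts the quantifiers: if no such $\mathcal A$ exists, then in particular each receptacle $\mathcal A_j$ supplied by \eqref{eq:condition-c} admits a unit witness $x_j^*$ with $\langle\varphi,P_{\mathcal A_j}x_j^*\rangle>\eta$, and feeding these witnesses back into \eqref{eq:condition-c} produces $\ell^1$-weights making $\|\sum_j a_jP_{\mathcal A_j}x_j^*\|$ small while $\langle\varphi,\cdot\rangle$ stays large --- a contradiction. Applied with $\varphi=T^*b_i\in S^{**}$ (where $b_i\in S$ is the block just constructed), this gives the "future annihilation" $\sup_{\|x^*\|\le1}|\langle b_i,TP_{\mathcal A_{i+1}}x^*\rangle|\le\eta_i$ needed at each inductive stage, and the tail obstruction disappears. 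Without this lemma (or an equivalent reformulation of \eqref{eq:condition-c}) your Step~2 does not close, and everything downstream is conditional on it.

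Beyond that, your architecture differs from the paper's in ways that create extra (surmountable but unaddressed) work. The paper does not diagonalize $T$ on $(e_j^*)$ and then glide; it first builds blocks $b_i^*=s_{k_0}^*-s_{k_1}^*$ (differences of \emph{two} basis vectors with nearly equal action of the finitely many past functionals --- this averaging trick replaces your appeal to weak nullity of $(e_j^*)$, which \eqref{eq:condition-c} does imply but which you would need to prove), interleaving past-annihilation and the future-annihilation lemma above, and only \emph{then} pigeonholes on the block diagonal $\langle b_j,Tb_j^*\rangle$ (using $\langle b_j,b_j^*\rangle=2$, so one of $T$, $\Id-T$ has block-diagonal entry of modulus $\ge1$). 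The inverse is then built directly as $P y^*=\sum_j\langle b_j,y^*\rangle\langle b_j,Hb_j^*\rangle^{-1}b_j^*$ followed by a Neumann-series inversion of $PHJ$; this avoids your heavier machinery (small-perturbation principle for weak$^*$ bases, complementation of the weak$^*$-closed span of $(w_i^*)$, which itself requires uniformly bounded weak$^*$-continuous biorthogonals, i.e.\ a diagonal lower bound of the very kind the paper's $b_j$ provide) and is what makes the constant $48K_u^7K_s^4$ traceable; as written, your final constant claim is unsupported.
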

We prove Theorem~\ref{thm:results:subsymmetric} in Section~\ref{sec:subsymmetric}.

A related concept is the notion of a primary Banach space: a Banach space $X$ is \emph{primary}, if
for every bounded projection $Q : X\to X$ either $Q(X)$ or $(\Id - Q)(X)$ is isomorphic to $X$ (see
e.g.~\cite{lindenstrauss-tzafriri:1977}).  In~\cite{casazza:kottman:lin:1977}, Casazza, Kottman and
Lin showed that for any Banach space $S$ with a symmetric basis $(e_j)_{j=1}^\infty$, the following
Banach spaces are primary:
\begin{itemize}
\item $\bigl( \sum S \bigr) $, where the direct sum is either the $\ell^p$-sum for $1 < p < \infty$
  or the $c_0$-sum, and $S$ is not isomorphic to $\ell^1$;
\item $\bigl( \sum_n S_n \bigr)$, where the direct sum is either the $\ell^p$-sum for
  $1 < p < \infty$ or the $c_0$-sum, and $S_n = \spn\{e_1,\ldots,e_n\}$;
\item $\bigl( \sum \ell^\infty \bigr)_p$, $1\leq p < \infty$.
\end{itemize}
These results were later complemented by Capon in~\cite{capon:1982:1}, who showed that the Banach
spaces
\begin{itemize}
\item $\bigl( \sum S \bigr)_1$ and $\bigl( \sum S \bigr)_\infty$,
\item $\bigl( \sum_n S_n \bigr)_1$ and $\bigl( \sum_n S_n \bigr)_\infty$,
\end{itemize}
are primary.  Theorem~\ref{thm:results:primary} below adds $\ell^p(S^*)$ to that list ($S^*$ may be
non-separable).

\begin{thm}\label{thm:results:primary}
  Let $S^*$ denote a Banach space with a subsymmetric weak$^*$~Schauder basis satisfying
  condition~\eqref{eq:condition-c}.  Then for all $1\leq p \leq \infty$ and any given bounded
  operator $T : \ell^p(S^*)\to \ell^p(S^*)$, there exist operators
  $M,N : \ell^p(S^*)\to \ell^p(S^*)$ such that for $H=T$ or $H = \Id_{\ell^p(S^*)}-T$, the diagram
  \begin{equation}\label{eq:thm:results:primary}
    \vcxymatrix{\ell^p(S^*) \ar[rr]^{\Id_{\ell^p(S^*)}} \ar[d]_{M} && \ell^p(S^*)\\
      \ell^p(S^*) \ar[rr]_H && \ell^p(S^*) \ar[u]_{N}
    }
    \qquad \|M\| \|N\|\leq 48 K_u^7 K_s^4,
  \end{equation}
  is commutative.  Consequently, $\ell^p(S^*)$ is primary.
\end{thm}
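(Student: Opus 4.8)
The plan is to derive Theorem~\ref{thm:results:primary} from Theorem~\ref{thm:results:subsymmetric} by a Pe\l czy\'nski--Casazza--Kottman--Lin type argument that lifts the factorization of the identity from a single copy of $S^*$ to the $\ell^p$-sum. Write $Z=\ell^p(S^*)$ and fix $T:Z\to Z$. First I would note that it suffices to produce operators $M,N:Z\to Z$ and $H\in\{T,\Id_Z-T\}$ realizing the commutative diagram~\eqref{eq:thm:results:primary}: when $T=Q$ is a projection, $H$ is a projection as well, so $H(Z)$ is complemented in $Z$, while $NHM=\Id_Z$ forces $HM$ to be an isomorphism onto a subspace of $H(Z)$ that is complemented in $Z$ by the idempotent $HMN$ (indeed $(HMN)^2=HM(NHM)N=HMN$, and its range is $HM(Z)$); since $Z\cong\ell^p(Z)$ (reindex $\mathbb N\times\mathbb N\cong\mathbb N$, with $\ell^\infty$-sums when $p=\infty$), the Pe\l czy\'nski decomposition method then yields $H(Z)\cong Z$.

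To construct the diagram, write $Z=\bigl(\sum_{n=1}^\infty Y_n\bigr)_p$ with $Y_n\cong S^*$, and let $J_n:Y_n\to Z$ and $P_n:Z\to Y_n$ be the canonical inclusions and coordinate projections. Applying Theorem~\ref{thm:results:subsymmetric} to the diagonal compressions $T_n:=P_nTJ_n:Y_n\to Y_n$ produces, for each $n$, operators $M_n,N_n:Y_n\to Y_n$ and $H_n\in\{T_n,\Id_{Y_n}-T_n\}$ with $N_nH_nM_n=\Id_{Y_n}$ and $\|M_n\|\,\|N_n\|\le 48K_u^7K_s^4$; after rescaling $M_n\mapsto\lambda_nM_n$, $N_n\mapsto\lambda_n^{-1}N_n$ we may assume $\|M_n\|=\|N_n\|\le(48K_u^7K_s^4)^{1/2}=:C_0$. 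Passing to an infinite $\mathbb L\subseteq\mathbb N$ on which the alternative $H_n=T_n$ occurs for every $n$ (if instead $H_n=\Id_{Y_n}-T_n$ for infinitely many $n$, restart with $\Id_Z-T$ in place of $T$, which is legitimate since $P_n(\Id_Z-T)J_n=\Id_{Y_n}-T_n$), identify $Z$ isometrically with $Z_{\mathbb L}:=\bigl(\sum_{m\in\mathbb L}Y_m\bigr)_p$ via a reindexing isometry $\rho:Z\to Z_{\mathbb L}$, and set
\[
  M:=J_{\mathbb L}\circ\Bigl(\bigoplus_{m\in\mathbb L}M_m\Bigr)\circ\rho,
  \qquad
  N:=\rho^{-1}\circ\Bigl(\bigoplus_{m\in\mathbb L}N_m\Bigr)\circ P_{\mathbb L},
\]
where $J_{\mathbb L}:Z_{\mathbb L}\to Z$ and $P_{\mathbb L}:Z\to Z_{\mathbb L}$ are inclusion and coordinate projection. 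Then $\|M\|,\|N\|\le C_0$, and a direct computation gives $NTM=\Id_Z+\mathcal E$, where $\mathcal E$ is conjugate via $\rho$ to the operator on $Z_{\mathbb L}$ with block entries $N_m\,(P_mTJ_n)\,M_n$ for $m\ne n$ in $\mathbb L$; in particular $\mathcal E$ depends only on the off-diagonal part of $T$.

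The heart of the matter — and the step I expect to be the main obstacle — is to choose $\mathbb L$ so that $\|\mathcal E\|<1$, indeed as small as we please. This is a gliding-hump/diagonalization argument: one builds $\mathbb L=\{n_1<n_2<\cdots\}$ recursively, at stage $k+1$ selecting $n_{k+1}$ so far out that the interactions of the block $Y_{n_{k+1}}$ with the finitely many blocks already chosen — read off from $N_{n_j}(P_{n_j}TJ_{n_{k+1}})M_{n_{k+1}}$ and $N_{n_{k+1}}(P_{n_{k+1}}TJ_{n_j})M_{n_j}$ — are summably small; boundedness of $T$ on the $\ell^p$-sum is exactly what prevents these interactions from all being large, and a Schur-type estimate then bounds $\|\mathcal E\|$ by the resulting small total. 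Where a plain selection of coordinates does not by itself suffice — notably for $p=1$, and more generally to tame columns of $T$ that fail to decay across the coordinates — one additionally composes each $M_n$ with a projection of $Y_n\cong S^*$ onto a block subspace of its subsymmetric weak$^*$ basis whose support is pushed far out; this is precisely where subsymmetry and condition~\eqref{eq:condition-c} of $S^*$ enter, keeping $N_nH_nM_n=\Id$ valid on such thin subspaces while making the images $TJ_nM_n(Y_n)$ essentially disjointly supported, so that their contributions to $\mathcal E$ add up in norm to something small. All of this is done coordinatewise, so no separability of $S^*$ is used; for $p=\infty$ one works throughout with $\ell^\infty$-sums and exploits the weak$^*$ structure of $\ell^\infty(S^*)$ both for the selection and, if needed, for the limiting step below.

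Once $\|\mathcal E\|<1$, the operator $\Id_Z+\mathcal E$ is invertible, and replacing $N$ by $(\Id_Z+\mathcal E)^{-1}N$ makes~\eqref{eq:thm:results:primary} commutative with $H=T$; the resulting bound is $\|M\|\,\|N\|\le C_0^2\,(1-\|\mathcal E\|)^{-1}=48K_u^7K_s^4\,(1-\|\mathcal E\|)^{-1}$. Driving the construction to ever sparser selections sends $\|\mathcal E\|\to 0$, and the bound $48K_u^7K_s^4$ on the nose then follows either from the slack in the estimate of Theorem~\ref{thm:results:subsymmetric} or by passing to a weak$^*$ cluster point of the corresponding pairs $(M,N)$, using that $\ell^p(S^*)$ is a dual space. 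Together with the reduction in the first paragraph, this proves Theorem~\ref{thm:results:primary}.
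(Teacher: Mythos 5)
Your architecture---apply Theorem~\ref{thm:results:subsymmetric} as a black box to the diagonal compressions $T_n=P_nTJ_n$, assemble $M,N$ as direct sums over a sparse set $\mathbb{L}$ of coordinates, and absorb the off-diagonal error $\mathcal{E}$ by a Neumann series---is genuinely different from the paper's, and it has a real gap at exactly the step you flag as ``the heart of the matter.'' For a general bounded $T$ on $\ell^p(S^*)$ the blocks $P_mTJ_n$ along a fixed row need not tend to zero in any sense as $n\to\infty$ (no compactness is available), so a gliding hump purely in the coordinate index gives no estimate on $\mathcal{E}$; you concede this and propose to additionally push the supports far out \emph{inside} each coordinate $Y_n\cong S^*$. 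But that is precisely where the reduction breaks down: the pair $(M_n,N_n)$ produced by Theorem~\ref{thm:results:subsymmetric} factors $\Id_{S^*}$ through $H_n$ via a block basis constructed adaptively for $T_n$ \emph{alone}, and to control the cross terms $N_m(P_mTJ_n)M_n$ one would have to choose the far-out block subspace in coordinate $n$ so as to annihilate the functionals $T^*b^*$ coming from all \emph{other} coordinates $m$. Those functionals only exist once the block bases in the other coordinates have been chosen, and they in turn constrain the later choices; so the supports in all coordinates must be selected by one interleaved induction, which cannot be extracted after the fact from separate, already-completed applications of Theorem~\ref{thm:results:subsymmetric}. (Naively post-composing $M_n$ with a far-out block projection $\Pi$ also just yields $N_nH_nM_n\Pi=\Pi\neq\Id_{Y_n}$; repairing this forces you to rebuild $M_n,N_n$ anyway.) No actual estimate for $\|\mathcal{E}\|$ is given, and none is available along the route described.

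For contrast: the paper does not deduce Theorem~\ref{thm:results:primary} from Theorem~\ref{thm:results:subsymmetric}, but reruns the entire construction in a two-parameter setting. A single block basis $(b_{ij}^{(\varepsilon)})$ of $\ell^p(S^*)$, each element supported in one coordinate, is built by induction along the diagonal order $\prec$ on $\mathbb{N}^2$; the decisive new ingredient is Lemma~\ref{lem:future:2d-new}, which for a functional $\varphi=T^*b_k^{*(\varepsilon)}\in(\ell^p(S^*))^*$ produces infinite sets $\mathcal{J}_i$ in \emph{every} coordinate simultaneously on which $\varphi$ is uniformly small---exactly the cross-coordinate annihilation your scheme is missing, and the place where condition~\eqref{eq:condition-c} is used. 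After that, the dichotomy, the operators $B,Q,P$, the Neumann-series inversion and the Pe{\l}czy{\'n}ski decomposition step proceed essentially as in your sketch (your reduction of primariness to the factorization diagram coincides with the paper's final step). Your closing device for recovering the constant $48K_u^7K_s^4$ on the nose would also need to be made precise, but that is secondary to the structural gap above.
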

The proof of Theorem~\ref{thm:results:primary} is given in Section~\ref{sec:sums-w-subsymmetric}.

The method of proof for both our main theorems is based on the recent
result~\cite{lechner:2016:factor-SL}.

\section{Notation}\label{sec:notation}

\noindent
Let $X$ be a Banach space and let $X^*$ denote its dual.  We say a sequence $(x_n^*)_{n=1}^\infty$
in $X^*$ \emph{converges weak$^*$}~to $x^*\in X^*$, if
\begin{equation*}
  \lim_{n} \langle x_n^*, x \rangle
  = \langle x^*, x\rangle,
  \qquad x\in X.  
\end{equation*}
In this case we write $\wslim x_n^* = x^*$.

We say that $(e_j^*)_{j=1}^\infty\subset X^*$ is a \emph{weak$^*$~Schauder basis} for $X^*$, if
there exists a basis $(e_j)_{j=1}^\infty$ for $X$ such that the \emph{associated coordinate
  functionals} of $(e_j){j=1}^\infty$ are $(e_j^*)_{j=1}^\infty$,
i.e.~$\langle e_j^*, e_j\rangle = 1$ and $\langle e_j^*, e_i\rangle = 1$, $i,j\in \mathbb{N}$,
$i\neq j$.  Thus,
\begin{equation*}
  x^* = \wslim_{n\to \infty} \sum_{j=1}^n \langle x^*, e_j\rangle e_j^*,
  \qquad x^*\in X^*.
\end{equation*}
For more details we refer to~\cite{singer:1970,lindenstrauss-tzafriri:1977}.  From now on, whenever
we encounter a series in the Banach space
\begin{itemize}
\item $X$, $U$ or $S$, the mode of convergence is in the norm topology of $X$, $U$ or $S$;
\item $X^*$, $U^*$ or $S^*$, the mode of convergence is in the weak$^*$~topology of $X^*$, $U^*$ or
  $S^*$.
\end{itemize}

We say the weak$^*$~Schauder basis $(e_j^*)_{j=1}^\infty$ of $X^*$ is \emph{unconditional}, if there
exists a constant $C \geq 1$ such that for all sequences of scalars $(a_j)_{j=1}^\infty$ with
$\sum_{j=1}^\infty a_j e_j^* \in X^*$ holds that
\begin{equation}\label{eq:unconditional}
  \Big\| \sum_{j=1}^\infty \gamma_j a_j e_j^* \Big\|_{X^*}
  \leq C \sup_{j\in \mathbb{N}} |\gamma_j| \Big\| \sum_{j=1}^\infty a_j e_j^* \Big\|_{X^*},
  \qquad
  (\gamma_j)_{j=1}^\infty\in \ell^\infty.
\end{equation}
We denote the infimum over all such constants $C\geq 1$ by $K_u$.  Note that
\begin{equation*}
  \text{$(e_j)_{j=1}^\infty$ is unconditional}
  \qquad\text{if and only if}\qquad
  \text{$(e_j^*)_{j=1}^\infty$ is unconditional}.
\end{equation*}

We say that a weak$^*$~Schauder basis $(e_j^*)_{j=1}^\infty$ of $X^*$ is \emph{subsymmetric}, if the
following two conditions are satisfied:
\begin{enumerate}[(a)]
\item $(e_j^*)_{j=1}^\infty$ is unconditional;
\item Let $(n_j)_{j=1}^\infty\subset \mathbb{N}$ be an increasing sequence, and let
  $Y_{(n_j)_{j=1}^\infty}$ denote the weak$^*$~closure of $\spn\{e_{n_j}^*\}_{j=1}^\infty$.  Then
  the operator $S_{(n_j)_{j=1}^\infty} : X^*\to Y_{(n_j)_{j=1}^\infty}$ given by
  \begin{equation}\label{eq:S:dfn}
    S_{(n_j)_{j=1}^\infty}\Big(\sum_{j=1}^\infty a_j e_j^* \Big)
    = \sum_{j=1}^\infty a_j e_{n_j}^*,
  \end{equation}
  is isomorphic.
\end{enumerate}
Note that
\begin{equation*}
  \text{$(e_j)_{j=1}^\infty$ is subsymmetric}
  \qquad\text{if and only if}\qquad
  \text{$(e_j^*)_{j=1}^\infty$ is subsymmetric}.
\end{equation*}
We define $K_s$ by
\begin{equation*}
  K_s
  = \sup \bigl\{ \|S_{(n_j)_{j=1}^\infty}\|,\|S_{(n_j)_{j=1}^\infty}^{-1}\| :
  \text{$(n_j)_{j=1}^\infty\subset \mathbb{N}$ is increasing}
  \bigr\}.
\end{equation*}
The constant $K_s$ is finite (see~\cite[Chapter~II, Theorem~21.2]{singer:1970}).

For all $1\leq p \leq \infty$ and for any Banach space $X$, we define as usual $\ell^p(X)$ by
\begin{equation*}
  \Bigl\{ (y_n)_{n=1}^\infty : y_n\in X\
  \ \text{and}\ 
  \|(y_n)_{n=1}^\infty\|_{\ell^p(X)}
  = \Bigl( \sum_{n=1}^\infty
  \|y_n\|_{X} \Bigr)^{1/p}
  < \infty \Bigr\}.
\end{equation*}

We conclude this section with two remarks on condition~\eqref{eq:condition-c}.
\begin{rem}\label{rem:condition-c:1}
  Any Banach space $U^*$ with an unconditional weak$^*$~Schauder basis $(e_j^*)$ in which
  \emph{$\ell^1$ does not embed}, satisfies condition~\eqref{eq:condition-c}.
\end{rem}

\begin{rem}\label{rem:condition-c:2}
  $\ell^\infty$ with the standard unit vector weak$^*$~Schauder basis $(e_j)_{j=1}^\infty$ satisfies
  condition~\eqref{eq:condition-c}.
\end{rem}

First, we will show that Remark~\ref{rem:condition-c:1} is indeed correct.  To this end, assume that
condition~\eqref{eq:condition-c} does not hold.  Then there exists an infinite set
$\Lambda\subset\mathbb{N}$ and a $\theta > 0$, such that for any sequence
$(\mathcal{A}_j)_{j=1}^\infty$ of pairwise disjoint and infinite subsets of $\Lambda$, we can find
$(x_j^*)_{j=1}^\infty\subset U^*$ with $\|x_j^*\|_{U^*}\leq 1$ such that for all scalars
$(a_j)_{j=1}^\infty\in \ell^1$ with $\|(a_j)_{j=1}^\infty\|_{\ell^1} = 1$ we have
\begin{equation*}
  \Big\| \sum_{j=1}^\infty a_j P_{\mathcal{A}_j} x_j^* \Big\|_{U^*}
  > \theta.
\end{equation*}
If we put $y_j^* = P_{\mathcal{A}_j} x_j^*$ and $\widetilde y_j^* = y_j^*/\|y_j^*\|_{U^*}$,
$j\in\mathbb{N}$, then we obtain by unconditionality
\begin{equation*}
  \Big\| \sum_{j=1}^\infty a_j \widetilde y_j^* \Big\|_{U^*}
  > \theta/K_u,
\end{equation*}
thus, $(\widetilde y_j^*)_{j=1}^\infty$ is equivalent to the standard unit vector basis of $\ell^1$.

To see that Remark~\ref{rem:condition-c:2} is valid, let $\Lambda\subset\mathbb{N}$ denote an infinite
set, and let $\theta > 0$.  Pick any sequence $(\mathcal{A}_j)_{j=1}^\infty$ of pairwise disjoint
subsets of $\Lambda$, and let $(x_j)_{j=1}^\infty$ denote any sequence of vectors in $\ell^\infty$
with $\|x_j\|_{\ell^\infty}\leq 1$, $j\in\mathbb{N}$.  Choose $N\in\mathbb{N}$ such that
$\frac{1}{N} \leq \theta$, and define $a_j = \frac{1}{N}$ if $j\leq N$, and $a_j = 0$ whenever
$j > N$.  Then clearly
\begin{equation*}
  \Bigl\| \sum_{j=1}^\infty a_j P_{\mathcal{A}_j} x_j \Bigr\|_{\ell^\infty}
  = \frac{1}{N}
  \leq \theta.
\end{equation*}

\section{Two subspace annihilation lemmata}\label{sec:annihilate}

\noindent
We will present two lemmata, which are used in the proof of our first main result
Theorem~\ref{thm:results:subsymmetric} to almost diagonalize an operator $T : X^*\to X^*$.  Although we use
Lemma~\ref{lem:past} only for $m=1$ here, we give the proof for general $m\in\mathbb{N}$ here, for later
reference.
\begin{lem}\label{lem:past}
  Let $X$ denote a Banach space with a normalized basis $(e_j)_{j=1}^\infty$, such that the
  associated coordinate functionals $(e_j^*)_{j=1}^\infty$ form a weak$^*$~Schauder basis of $X^*$.
  Let $\Lambda_0\subset\mathbb{N}$ denote an infinite set, $m,n\in \mathbb{N}$,
  $x_1^*,\ldots,x_n^*\in X^*$ and $\eta>0$.  Then we can find a finite set
  $\mathcal{F}\subset\Lambda_0$ with cardinality $|\mathcal{F}| = 2m$ such that
  \begin{equation*}
    \max_{1\leq j \leq n}
    \Big|\Big\langle \sum_{k\in \mathcal{F}} \varepsilon_k e_k, x_j^* \Big\rangle\Big|
    \leq \eta,
    \qquad \varepsilon\in \mathcal{E},
  \end{equation*}
  where the set of signs $\mathcal{E} = \mathcal{E}(\mathcal{F})$ is given by
  \begin{equation*}
    \mathcal{E}
    = \Big\{ (\varepsilon_k)_{k\in \mathcal{F}} :
    \varepsilon_k\in \{\pm 1\},\ \sum_{k\in \mathcal{F}} \varepsilon_k = 0 \Big\}.
  \end{equation*}
\end{lem}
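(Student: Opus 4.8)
The plan is a soft compactness argument; note that only the normalization $\|e_k\|_X=1$ is really needed, so that $\langle e_k,x_j^*\rangle=\langle x_j^*,e_k\rangle$ is a scalar with $|\langle x_j^*,e_k\rangle|\le\|x_j^*\|_{X^*}$. (The weak$^*$~Schauder basis hypothesis plays no role in this lemma; it is part of the statement only for uniformity with its later uses.)

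First I would enumerate $\Lambda_0=\{k_1<k_2<\cdots\}$. For each fixed $j$ the scalar sequence $(\langle x_j^*,e_{k_i}\rangle)_{i=1}^\infty$ is bounded by $\|x_j^*\|_{X^*}$ and hence has a convergent subsequence; performing this extraction successively for $j=1,\dots,n$ (a finite diagonal argument) produces an infinite set $M\subseteq\Lambda_0$ and scalars $c_1,\dots,c_n$ with
\begin{equation*}
  \lim_{k\in M}\langle x_j^*,e_k\rangle=c_j,
  \qquad 1\le j\le n .
\end{equation*}
Then I would choose a finite set $\mathcal{F}\subseteq M$ with $|\mathcal{F}|=2m$, consisting of indices far enough out along $M$ that
\begin{equation*}
  \bigl|\langle x_j^*,e_k\rangle-c_j\bigr|\le\frac{\eta}{2m},
  \qquad k\in\mathcal{F},\ 1\le j\le n .
\end{equation*}

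This $\mathcal{F}$ works. Indeed, fix $\varepsilon\in\mathcal{E}(\mathcal{F})$; since $|\mathcal{F}|=2m$ and $\sum_{k\in\mathcal{F}}\varepsilon_k=0$, exactly $m$ of the signs are $+1$ and $m$ are $-1$, so $\sum_{k\in\mathcal{F}}\varepsilon_k c_j=0$ for every $j$. Hence for each $1\le j\le n$,
\begin{align*}
  \Big|\Big\langle\sum_{k\in\mathcal{F}}\varepsilon_k e_k,\ x_j^*\Big\rangle\Big|
  &=\Big|\sum_{k\in\mathcal{F}}\varepsilon_k\langle x_j^*,e_k\rangle\Big|
  =\Big|\sum_{k\in\mathcal{F}}\varepsilon_k\bigl(\langle x_j^*,e_k\rangle-c_j\bigr)\Big|\\
  &\le\sum_{k\in\mathcal{F}}\bigl|\langle x_j^*,e_k\rangle-c_j\bigr|
  \le 2m\cdot\frac{\eta}{2m}=\eta,
\end{align*}
and taking the maximum over $j$ yields the assertion.

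There is essentially no obstacle here; the only point worth stressing is that the coordinates $\langle x_j^*,e_k\rangle$ need \emph{not} tend to $0$ (for instance in $\ell^\infty=(\ell^1)^*$ with the unit vector basis the vector $(1,1,\dots)$ has all coordinates equal to $1$), so one cannot make the individual terms $\varepsilon_k\langle e_k,x_j^*\rangle$ small by passing to large indices. It is precisely the balanced-sign restriction $\sum_{k\in\mathcal{F}}\varepsilon_k=0$ that allows the common cluster value $c_j$ to be cancelled, which is why the conclusion is stated over the set $\mathcal{E}(\mathcal{F})$ rather than over all sign vectors.
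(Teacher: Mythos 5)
Your proof is correct and follows essentially the same route as the paper's: both arguments first stabilize the coordinates $\langle e_k, x_j^*\rangle$ on an infinite subset of $\Lambda_0$ (you via Bolzano--Weierstrass and a finite diagonal extraction, the paper via a pigeonhole partition into intervals of length $\eta/(2m)$, iterated over $j$) and then exploit the balanced-sign condition $\sum_{k\in\mathcal{F}}\varepsilon_k=0$ to cancel the common cluster value. This is only a cosmetic difference, and your closing remark about why the zero-sum restriction is essential is accurate.
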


\begin{proof}
  Firstly, we will prove that given $x^*\in X^*$, there exists an infinite set
  $\Lambda\subset \Lambda_0$ such that
  \begin{equation}\label{proof:lem:past:1}
    | \langle e_k, x^* \rangle - \langle e_{k'}, x^* \rangle |
    \leq \frac{\eta}{m},
    \qquad k,k'\in \Lambda.
  \end{equation}
  To this end, we define the sets
  \begin{equation*}
    \mathcal{M}_\ell = \Big\{ j\in \Lambda_0 :
    \frac{\eta}{2m} (\ell-1) \leq \langle e_j, x^* \rangle < \frac{\eta}{2m} \ell
    \Big\},
    \qquad \ell\in \mathbb{Z}.
  \end{equation*}
  Since $|\langle e_j, x^* \rangle| \leq \|x^*\| < \infty$, there are only finitely many
  $\ell\in \mathbb{Z}$ for which $\mathcal{M}_\ell$ is non-empty.  Hence, since
  $\bigcup_{\ell\in\mathbb{Z}} \mathcal{M}_l = \Lambda_0$, at least one of the sets
  $\mathcal{M}_\ell$, $\ell\in \mathbb{Z}$ has to be infinite.  Clearly, this infinite set
  satisfies~\eqref{proof:lem:past:1}.

  Now we will just repeatedly use~\eqref{proof:lem:past:1}.  We begin by selecting
  $\Lambda_1\subset\Lambda_0$ so that
  \begin{equation*}
    | \langle e_k, x_1^* \rangle - \langle e_{k'}, x_1^* \rangle |
    \leq \frac{\eta}{m},
    \qquad k,k'\in \Lambda_1.
  \end{equation*}
  Let $1 \leq i \leq n-1$ and assume we have already chosen infinite sets
  $\Lambda_0\supset\Lambda_1\supset \Lambda_2 \supset \cdots \supset \Lambda_{i-1}$ such that
  \begin{equation*}
    \max_{1\leq j \leq i-1} | \langle e_k, x_j^* \rangle - \langle e_{k'}, x_j^* \rangle |
    \leq \frac{\eta}{m},
    \qquad k,k'\in \Lambda_{i-1}.
  \end{equation*}
  By replacing $\Lambda_0$ with the infinite set $\Lambda_{i-1}$ in~\eqref{proof:lem:past:1}, we can
  find an infinite set $\Lambda_i\subset \Lambda_{i-1}$ such that
  \begin{equation*}
    \max_{1\leq j \leq i} | \langle e_k, x_j^* \rangle - \langle e_{k'}, x_j^* \rangle |
    \leq \frac{\eta}{m},
    \qquad k,k'\in \Lambda_i.
  \end{equation*}
  Stopping the induction after $n$ steps, we obtain
  \begin{equation}\label{proof:lem:past:2}
    \max_{1\leq j \leq n} | \langle e_k, x_j^* \rangle - \langle e_{k'}, x_j^* \rangle |
    \leq \frac{\eta}{m},
    \qquad k,k'\in \Lambda_n.
  \end{equation}

  We choose any $\mathcal{F}\subset \Lambda_n$ with $|\mathcal{F}| = 2m$ and let
  $\varepsilon = (\varepsilon_k)_{k\in \mathcal{F}}\in \mathcal{E} = \mathcal{E}(\mathcal{F})$,
  i.e.~$\varepsilon_k\in \{\pm 1\}$ is such that $\sum_{k\in \mathcal{F}} \varepsilon_k = 0$.  Then
  since $\mathcal{F}\subset \Lambda_n$, we obtain from~\eqref{proof:lem:past:2} that
  \begin{equation*}
    \Big|\Big\langle \sum_{k\in \mathcal{F}} \varepsilon_k e_k, x_j^* \Big\rangle\Big|
    = \Big|
    \sum_{\substack{k\in \mathcal{F}\\\varepsilon_k=1}} \langle e_k, x_j^*\rangle
    - \sum_{\substack{k\in \mathcal{F}\\\varepsilon_k=-1}} \langle e_k, x_j^*\rangle
    \Big|
    \leq \eta,
    \qquad 1\leq j \leq n.
    \qedhere
  \end{equation*}
\end{proof}

The following Lemma is an abstract version of an argument which Lindenstrauss used
in~\cite{lindenstrauss:1967} (see also~\cite{lindenstrauss-tzafriri:1977}) to show that
$\ell^\infty$ is prime (which means that every infinite dimensional complemented subspace of
$\ell^\infty$ is isomorphic to $\ell^\infty$).
\begin{lem}\label{lem:future}
  Let $U$ denote a Banach space with a normalized unconditional basis $(e_j)_{j=1}^\infty$, such
  that the associated coordinate functionals $(e_j^*)_{j=1}^\infty$ form an unconditional
  weak$^*$~Schauder basis of $U^*$.  Let $\Lambda\subset\mathbb{N}$ denote an infinite set,
  $\eta > 0$ and $\varphi\in U^{**}$.  If $U^*$ together with $(e_j^*)_{j=1}^\infty$ satisfies
  condition~\eqref{eq:condition-c}, then there exists an infinite set $\mathcal{A}\subset \Lambda$
  such that
  \begin{equation*}
    \sup_{\|x^*\|_{U^*}\leq 1} |\langle \varphi, P_{\mathcal{A}} x^* \rangle| \leq \eta.
  \end{equation*}
\end{lem}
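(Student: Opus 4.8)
The plan is to argue by contradiction, using that condition~\eqref{eq:condition-c} is tailored to rule out precisely the obstruction that would otherwise block the estimate. First discard the trivial case $\varphi = 0$ (then every infinite $\mathcal{A}\subset\Lambda$ works), so assume $\|\varphi\|_{U^{**}} > 0$, and suppose toward a contradiction that \emph{no} infinite $\mathcal{A}\subset\Lambda$ satisfies the conclusion, i.e. $\sup_{\|x^*\|_{U^*}\le 1}|\langle\varphi,P_{\mathcal{A}}x^*\rangle| > \eta$ for every infinite $\mathcal{A}\subset\Lambda$. Set $\theta := \eta/\bigl(2K_u(\|\varphi\|_{U^{**}}+1)\bigr)$ and apply condition~\eqref{eq:condition-c} to $\Lambda$ and this $\theta$, producing pairwise disjoint infinite sets $\mathcal{A}_1,\mathcal{A}_2,\ldots\subset\Lambda$ with the stated $\ell^1$-averaging property.

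Since each $\mathcal{A}_j$ is infinite, the contradiction hypothesis lets me choose $x_j^*\in U^*$ with $\|x_j^*\|_{U^*}\le 1$ and $|\langle\varphi,P_{\mathcal{A}_j}x_j^*\rangle| > \eta$; replacing $x_j^*$ by a suitable unimodular multiple (which passes through the linear map $P_{\mathcal{A}_j}$) I may assume $\langle\varphi,P_{\mathcal{A}_j}x_j^*\rangle$ is real and $> \eta$. Now feed $(x_j^*)_{j=1}^\infty$ into condition~\eqref{eq:condition-c}: this yields scalars $(a_j)_{j=1}^\infty\in\ell^1$ with $\|(a_j)_{j=1}^\infty\|_{\ell^1}=1$ and $\bigl\|\sum_{j=1}^\infty a_j P_{\mathcal{A}_j}x_j^*\bigr\|_{U^*}\le\theta$.

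The key step is a sign correction. Because the $\mathcal{A}_j$ are pairwise disjoint, the summands $a_j P_{\mathcal{A}_j}x_j^*$ have pairwise disjoint supports with respect to the weak$^*$~Schauder basis $(e_j^*)$; hence, choosing unimodular $\epsilon_j$ with $\epsilon_j a_j = |a_j|$, for every $N$ the finite vector $z_N^* := \sum_{j=1}^N \epsilon_j a_j P_{\mathcal{A}_j}x_j^*$ is obtained from $\sum_{j=1}^\infty a_j P_{\mathcal{A}_j}x_j^*$ by applying a coordinatewise multiplier of modulus $\le 1$, so unconditionality~\eqref{eq:unconditional} gives $\|z_N^*\|_{U^*}\le K_u\theta$. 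On the other hand, since $z_N^*$ is a finite sum, $\langle\varphi,z_N^*\rangle = \sum_{j=1}^N |a_j|\,\langle\varphi,P_{\mathcal{A}_j}x_j^*\rangle > \eta\sum_{j=1}^N|a_j|$, which exceeds $\eta/2$ once $N$ is large enough that $\sum_{j=1}^N|a_j| > 1/2$. Combining, $\eta/2 < |\langle\varphi,z_N^*\rangle| \le \|\varphi\|_{U^{**}}\|z_N^*\|_{U^*} \le K_u\|\varphi\|_{U^{**}}\theta < \eta/2$, the desired contradiction (which in fact shows that one of the $\mathcal{A}_j$ already works).

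The step I expect to require the most care is the passage from $\sum_{j=1}^\infty a_j P_{\mathcal{A}_j}x_j^*$ to the functional value: since $\varphi\in U^{**}$ need not be weak$^*$~continuous, one cannot simply distribute $\varphi$ over the weak$^*$-convergent series, which is exactly why I work with the finite truncation $z_N^*$ — there $\varphi$ is linear by hand, while unconditionality still controls its norm. The companion point, which motivates introducing the $\epsilon_j$ at all, is that the unknown signs of the $a_j$ would destroy the lower bound $\langle\varphi,z_N^*\rangle > \eta\sum_{j\le N}|a_j|$; disjointness of the $\mathcal{A}_j$ together with unconditionality of $(e_j^*)$ is what permits correcting them at the cost of the harmless factor $K_u$.
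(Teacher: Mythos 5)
Your proof is correct and follows essentially the same contradiction argument as the paper: choose $\theta$ proportional to $\eta/(\|\varphi\|_{U^{**}}K_u)$, extract the $\mathcal{A}_j$ and $x_j^*$ from the negation of the conclusion, invoke condition~\eqref{eq:condition-c}, and play the upper bound $\|\varphi\|\,\theta$ against the lower bound $\eta/2$ on a finite truncation. Your explicit sign correction via the multipliers $\epsilon_j$ (using disjointness of the $\mathcal{A}_j$ and unconditionality) is in fact a detail the paper leaves implicit — it is exactly what the factor $K_u$ in the paper's choice of $\theta$ is there to absorb — so your write-up is, if anything, the more complete one.
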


\begin{proof}
  Let $\eta > 0$, $\varphi\in U^{**}\setminus\{0\}$, and assume the conclusion of the Lemma is
  false.  Hence, for all infinite sets $\mathcal{A}\subset\Lambda$ there exists $x^*$ with
  $\|x^*\|_{U^*} = 1$ such that
  \begin{equation}\label{proof:lem:future:0}
    \langle \varphi, P_{\mathcal{A}} x^* \rangle
    > \eta.
  \end{equation}
  We define $\theta = \frac{\eta}{2\|\varphi\|_{U^{**}}K_u}$ and choose
  $(\mathcal{A}_j)_{j=1}^\infty$ with $\mathcal{A}_j\subset\Lambda$, $j\in\mathbb{N}$ according to
  condition~\eqref{eq:condition-c}.  By our assumption~\eqref{proof:lem:future:0}, we can find
  $x^*_j\in {U^*}$ with $\|x^*_j\|_{U^*}=1$, $j\in \mathbb{N}$, such that
  \begin{equation}\label{proof:lem:future:1}
    \langle \varphi, P_{\mathcal{A}_j} x^*_j \rangle > \eta.
  \end{equation}
  By condition~\eqref{eq:condition-c} we can find a sequence $(a_j)_{j=1}^\infty\in \ell^1$ with
  $\|(a_j)_{j=1}^\infty\|_{\ell^1} = 1$ such that
  \begin{equation}\label{proof:lem:future:2}
    \Big\| \sum_{j=1}^\infty a_j P_{\mathcal{A}_j} x^*_j \Big\|_{U^*}
    \leq \theta
    = \frac{\eta}{2\|\varphi\|_{U^{**}}K_u}.
  \end{equation}
  But on the other hand, we obtain from~\eqref{proof:lem:future:1} that
  \begin{equation*}
    \Big\langle \varphi, \sum_{j=1}^n a_j P_{\mathcal{A}_j} x^*_j \Big\rangle
    > \eta,
  \end{equation*}
  for a large enough $n\in \mathbb{N}$.  Combining the latter estimate
  with~\eqref{proof:lem:future:2}, we arrive at the contradiction
  \begin{equation*}
    \eta
    < \Big\langle \varphi, \sum_{j=1}^n a_j P_{\mathcal{A}_j} x^*_j \Big\rangle
    \leq \|\varphi\|_{U^{**}} \Big\| \sum_{j=1}^n a_j P_{\mathcal{A}_j} x^*_j \Big\|_{U^*}
    \leq \eta/2.
    \qedhere
  \end{equation*}
\end{proof}

\section{Proof of Theorem~\ref{thm:results:subsymmetric}}\label{sec:subsymmetric}

\noindent
For convenience of the reader, we restate Theorem~\ref{thm:results:subsymmetric} here below.
\begin{thm}[Main result Theorem~\ref{thm:results:subsymmetric}]\label{thm:subsymmetric}
  Let $S^*$ denote a Banach space with a subsymmetric weak$^*$~Schauder basis satisfying
  condition~\eqref{eq:condition-c}.  Then for any given bounded operator $T : S^*\to S^*$, there
  exist operators $M, N : S^*\to S^*$ such that for $H=T$ or $H = \Id_{S^*}-T$, the diagram
  \begin{equation}\label{eq:thm:subsymmetric}
    \vcxymatrix{S^* \ar[r]^{\Id_{S^*}} \ar[d]_{M} & S^*\\
      S^* \ar[r]_H & S^* \ar[u]_{N}}
    \qquad \|M\| \|N\| \leq 48 K_u^7 K_s^4
  \end{equation}
  is commutative.  Consequently, $H(S^*)$ contains a subspace which is isomorphic to $S^*$ and
  complemented in $S^*$.
\end{thm}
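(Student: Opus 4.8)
The plan is to near-diagonalize $H\in\{T,\Id_{S^*}-T\}$ along a well-chosen block basis of $(e_j^*)$, to invert the resulting near-diagonal operator by a Neumann series, and to read $M$ and $N$ off the factorization. First the reduction: writing $d_j=\langle Te_j^*,e_j\rangle$ for the $j$-th weak$^*$ coordinate of $Te_j^*$, the sequence $(d_j)$ is bounded, so after passing to an infinite set $\Lambda_0\subset\mathbb N$ I may assume $d_j\to\lambda$. Since $d_j+\langle(\Id_{S^*}-T)e_j^*,e_j\rangle=1$, one of $|\lambda|$, $|1-\lambda|$ is at least $\tfrac12$; I fix $H$ to be $T$ or $\Id_{S^*}-T$ so that its limiting diagonal value $\nu$ satisfies $|\nu|\ge\tfrac12$. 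This is the only place the alternative in the statement enters: the whole selection below is carried out for $T$, and passing to $\Id_{S^*}-T$ only changes the off-diagonal entries of the relevant compression by a sign and the diagonal by $1-(\cdot)$.

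Next I would build, by induction on $j$, disjoint indices $p_1<q_1<p_2<q_2<\cdots$ in $\Lambda_0$, set $b_j^*=e_{p_j}^*-e_{q_j}^*$ and $c_j=\tfrac12(e_{p_j}-e_{q_j})$ (so $\langle b_j^*,c_k\rangle=\delta_{jk}$), and consider $\widetilde S\colon e_j^*\mapsto b_j^*$ mapping $S^*$ onto the weak$^*$-closed span $Y$ of $\{b_j^*\}$, together with the projection $\mathcal P x^*=\sum_j\langle x^*,c_j\rangle b_j^*$ of $S^*$ onto $Y$; unconditionality and subsymmetry make $(b_j^*)$ equivalent to $(e_j^*)$ and bound $\|\widetilde S\|,\|\widetilde S^{-1}\|,\|\mathcal P\|$ in terms of $K_u$ and $K_s$ alone. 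The compression $\widetilde H:=\widetilde S^{-1}\mathcal P\,H\,\widetilde S\colon S^*\to S^*$ has matrix $\widetilde H_{kj}=\langle Hb_j^*,c_k\rangle$; its diagonal is $\widetilde H_{jj}=\tfrac12(d_{p_j}+d_{q_j})-\tfrac12(\langle He_{p_j}^*,e_{q_j}\rangle+\langle He_{q_j}^*,e_{p_j}\rangle)$, its entries below the diagonal ($i<j$) are $\widetilde H_{ji}=\tfrac12\langle e_{p_j}-e_{q_j},\,H(e_{p_i}^*-e_{q_i}^*)\rangle$, and its entries above the diagonal ($k>j$) are $\widetilde H_{jk}=\tfrac12\langle H^*(e_{p_j}-e_{q_j}),\,e_{p_k}^*-e_{q_k}^*\rangle$. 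At step $j$, with the first $j-1$ pairs fixed and $\Lambda_0$ already thinned to an infinite reservoir $\mathcal A_{j-1}$, I would apply Lemma~\ref{lem:past} with $m=1$ to the finite family $\{H(e_{p_i}^*-e_{q_i}^*):i<j\}\subset S^*$ to pick $\{p_j,q_j\}\subset\mathcal A_{j-1}$ — chosen far enough out that $d_{p_j},d_{q_j}$ are within a prescribed, geometrically small tolerance of $\lambda$ — so that every entry $\widetilde H_{ji}$, $i<j$, is tiny; and then apply Lemma~\ref{lem:future} with $\varphi=H^*(e_{p_j}-e_{q_j})\in S^{**}$ to obtain an infinite $\mathcal A_j\subset\mathcal A_{j-1}$ lying above $q_j$ with $\sup_{\|x^*\|\le1}|\langle H^*(e_{p_j}-e_{q_j}),P_{\mathcal A_j}x^*\rangle|$ tiny, which — since every later pair is drawn from $\mathcal A_j$ — makes every entry $\widetilde H_{jk}$, $k>j$, tiny. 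Thus the $j$-th row of $\widetilde H$ off the diagonal is controlled at step $j$, by a backward-looking lemma on the left of the diagonal and a forward-looking one on the right.

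As $(\widetilde H_{jj})_j$ is bounded, I would finally pass to a further sub-block-sequence along which $\widetilde H_{jj}$ converges, necessarily to $\nu$ (by the same identity relating the diagonals of the compressions of $T$ and $\Id_{S^*}-T$), and discard finitely many blocks so that $\sup_j|\widetilde H_{jj}-\nu|$ is as small as desired; then the diagonal part of $\widetilde H-\nu\Id_{S^*}$ has norm at most $K_u\sup_j|\widetilde H_{jj}-\nu|$. Choosing all tolerances small enough and using that the coordinates of any $x^*$ are bounded by $\|x^*\|$ while the head and tail projections of the weak$^*$ expansion are uniformly bounded, the off-diagonal part of $\widetilde H$ has operator norm $\le\varepsilon$ as well; hence $\|\widetilde H-\nu\Id_{S^*}\|<|\nu|$, so $\widetilde H$ is invertible with $\|\widetilde H^{-1}\|$ bounded by an absolute constant (here $|\nu|\ge\tfrac12$ is used). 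Then $\Id_{S^*}=\widetilde H^{-1}\widetilde S^{-1}\mathcal P\,H\,\widetilde S$, so the diagram commutes with $M=\widetilde S$ and $N=\widetilde H^{-1}\widetilde S^{-1}\mathcal P$, and tracking the constants yields $\|M\|\,\|N\|\le 48K_u^7K_s^4$. Finally $HM$ is bounded below (by $\|N\|^{-1}$), so $HM(S^*)$ is a copy of $S^*$ inside $H(S^*)$, and $HMN$ is idempotent because $NHM=\Id_{S^*}$, hence a bounded projection of $S^*$ onto $HM(S^*)$; so $H(S^*)$ contains a subspace isomorphic to $S^*$ and complemented in $S^*$.

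The main obstacle is the construction in the second paragraph. One has to run the backward-looking Lemma~\ref{lem:past} and the forward-looking Lemma~\ref{lem:future} inside a single nested induction while keeping every reservoir $\mathcal A_j$ infinite, and — this is exactly why the blocks must be \emph{differences} $e_{p_j}^*-e_{q_j}^*$ rather than single basis functionals, so that Lemma~\ref{lem:past} has a balanced combination to act on — one of the two ``within-block'' cross terms $\langle He_{p_j}^*,e_{q_j}\rangle$, $\langle He_{q_j}^*,e_{p_j}\rangle$ cannot be driven to zero along the construction, which forces the scalar $\nu$, and with it the $T$-versus-$(\Id_{S^*}-T)$ dichotomy, to be identified only a posteriori. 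Everything else — the equivalence $(b_j^*)\sim(e_j^*)$, the boundedness of $\mathcal P$, the Neumann-series inversion, and the constant bookkeeping leading to $48K_u^7K_s^4$ — is routine.
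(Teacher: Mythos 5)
Your proposal is correct and follows essentially the same route as the paper: the same alternating use of Lemma~\ref{lem:past} (backward) and Lemma~\ref{lem:future} (forward) on difference blocks $e_{p_j}^*-e_{q_j}^*$, the same isomorphism/projection pair onto the weak$^*$-closed block span, and the same Neumann-series inversion; the paper merely normalizes its map $P$ by the diagonal entries $\langle b_j,Hb_j^*\rangle$, which are $\geq 1$ on an infinite set because $\langle b_j,Tb_j^*\rangle+\langle b_j,(\Id_{S^*}-T)b_j^*\rangle=2$, instead of passing to a further subsequence along which the compressed diagonal converges. The only caveat is that your opening reduction via $d_j\to\lambda$ is vacuous---as you yourself note, the within-block cross terms prevent $\lambda$ from determining the compressed diagonal---so the $T$-versus-$(\Id_{S^*}-T)$ choice must be, and in your final paragraph correctly is, made a posteriori exactly as in the paper, and the value $48K_u^7K_s^4$ would still need the explicit bookkeeping you defer.
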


\begin{myproof}
  In this proof, we use the following constants:
  \begin{equation}\label{eq:proof:thm:subsymmetric:eta}
    \eta_i = K_u^{-1}4^{-i-1},
    \qquad i\in \mathbb{N}.
  \end{equation}

  \begin{proofstep}[Step~\theproofstep: Inductive construction of the block basis]
    Let $(s_j)_{j=1}^\infty$ denote the subsymmetric basis of $S$, and let $(s_j^*)_{j=1}^\infty$
    denote the associated coordinate functionals, which form a weak$^*$ Schauder basis for $S^*$,
    such that condition~\eqref{eq:condition-c} is satisfied.  We put $\mathcal{A}_1 = \mathbb{N}$,
    choose $\mathcal{B}_1 = \{1,2\}$, and we define
    \begin{equation}\label{eq:proof:thm:subsymmetric:block-initial}
      b_1 = s_1 - s_2
      \qquad\text{and}\qquad
      b_1^* = s_1^* - s_2^*.
    \end{equation}
    By Lemma~\ref{lem:future}, there exists an infinite collection
    $\mathcal{A}_2\subset \mathcal{A}_1\setminus\{1,2\}$, such that
    \begin{equation}\label{eq:proof:thm:subsymmetric:block-initial:estimate}
      \sup_{\|x^*\|_{S^*}\leq 1} |\langle T^* b_1, P_{\mathcal{A}_2} x^* \rangle|
      \leq \eta_1.
    \end{equation}

    Now assume we have already chosen infinite sets
    $\mathcal{A}_1\supset \mathcal{A}_2\supset \cdots \supset \mathcal{A}_i$, pairwise disjoint
    finite sets $\mathcal{B}_j\subset \mathcal{A}_j$ with $|\mathcal{B}_j|=2$, $1\leq j \leq i-1$
    and that we have defined
    \begin{equation}\label{eq:proof:thm:subsymmetric:block-hypothesis}
      b_j = s_{k_0} - s_{k_1}
      \quad\text{and}\quad
      b_j^* = s_{k_0}^* - s_{k_1}^*,
      \qquad k_0,k_1\in \mathcal{B}_j, k_0 < k_1,
    \end{equation}
    for all $1\leq j \leq i-1$.  By Lemma~\ref{lem:past}, we can find a set
    $\mathcal{B}_i\subset \mathcal{A}_i$ with $|\mathcal{B}_i| = 2$ so that if we put
    \begin{equation}\label{eq:proof:thm:subsymmetric:block-step:1}
      b_i = s_{k_0} - s_{k_1}
      \quad\text{and}\quad
      b_i^* = s_{k_0}^* - s_{k_1}^*,
      \qquad k_0,k_1\in \mathcal{B}_i, k_0 < k_1,
    \end{equation}
    we have the estimate
    \begin{equation}\label{eq:proof:thm:subsymmetric:estimate-step:1}
      \sum_{j=1}^{i-1} |\langle b_i, T b_j^* \rangle|
      \leq \eta_i.
    \end{equation}
    By Lemma~\ref{lem:future}, there exists an infinite set $\mathcal{A}_{i+1}$ with
    \begin{equation}\label{eq:proof:thm:subsymmetric:A:dfn}
      \mathcal{A}_{i+1}\subset \mathcal{A}_i\setminus \Bigl\{ k\in \mathbb{N} : k\leq
      \max\big(\bigcup_{j=1}^i \mathcal{B}_i\big) \Bigr\}
    \end{equation}
    such that
    \begin{equation}\label{eq:proof:thm:subsymmetric:estimate-step:2}
      \sup_{\|x^*\|_{S^*}\leq 1} \Big|\Big\langle T^* b_i,
      P_{\mathcal{A}_{i+1}} x^* \Big\rangle\Big|
      \leq \eta_i.
    \end{equation}
    This completes the inductive step.

    The
    estimates~\eqref{eq:proof:thm:subsymmetric:block-initial:estimate},~\eqref{eq:proof:thm:subsymmetric:estimate-step:1}
    and~\eqref{eq:proof:thm:subsymmetric:estimate-step:2} imply
    \begin{subequations}\label{eq:proof:thm:subsymmetric:estimate:off-diag}
      \begin{align}
        \sum_{j=1}^{i-1} |\langle b_i, T b_j^* \rangle|
        & \leq \eta_i,
        && i\in \mathbb{N},
           \label{eq:proof:thm:subsymmetric:estimate:off-diag:a}\\
        \Big|\Big\langle b_i,
        T \sum_{j=i+1}^\infty a_j b_j^*\Big\rangle\Big|
        & \leq \eta_i \Big\|\sum_{j=i+1}^\infty a_j b_j^*\Big\|_{S^*},
        && i\in \mathbb{N},
           \label{eq:proof:thm:subsymmetric:estimate:off-diag:b}
      \end{align}
      whenever $(a_j)_{j=1}^\infty\subset \mathbb{R}$ is such that
      $\sum_{j=i+1}^\infty a_j b_j^* \in S^*$.
    \end{subequations}
  \end{proofstep}

  \begin{proofstep}[Step~\theproofstep: Basic operators]
    We define the operators $B,Q : S^*\to S^*$ by
    \begin{subequations}\label{eq:BQ}
      \begin{align}
            B x^*
        & = \sum_{j=1}^\infty \frac{1}{\|b_j^*\|_{S^*}} \langle x^*, s_j\rangle b_j^*,
        && x^*\in S^*,
           \label{eq:BQ:a}\\
        Q x^*
          & = \sum_{j=1}^\infty
            \frac{\|b_j^*\|_{S^*}}{|\mathcal{B}_j|}
            \langle b_j, x^*\rangle s_j^*,
        && x^*\in S^*.
           \label{eq:BQ:b}
      \end{align}
    \end{subequations}

    We will now show that $B$ and $Q$ are bounded linear operators. To this end, let
    $x^* = \sum_{j=1}^\infty a_j s_j^*$.  Firstly, note that
    \begin{equation*}
      \Big\|B\Big( \sum_{j=1}^\infty a_j s_j^* \Big)\Big\|_{S^*}
      \leq \Big\|\sum_{j=1}^\infty \frac{1}{\|b_j^*\|_{S^*}}
      a_j s_{k_0(j)}^* \Big\|_{S^*}
      + \Big\|\sum_{j=1}^\infty \frac{1}{\|b_j^*\|_{S^*}}a_j s_{k_1(j)}^* \Big\|_{S^*},
    \end{equation*}
    where $\{k_0(j),k_1(j)\} = \mathcal{B}_j$, $j\in \mathbb{N}$.  By unconditionality, we obtain
    $\|b_j^*\|_{S^*} \geq K_u^{-1}$ and
    \begin{equation*}
      \Big\|B\Big( \sum_{j=1}^\infty a_j s_j^* \Big)\Big\|_{S^*}
      \leq K_u^2\Big(\Big\|\sum_{j=1}^\infty a_j s_{k_0(j)}^* \Big\|_{S^*}
      + \Big\|\sum_{j=1}^\infty a_j s_{k_1(j)}^* \Big\|_{S^*}\Big),
    \end{equation*}
    Furthermore, by~\eqref{eq:proof:thm:subsymmetric:A:dfn} we have that $s_{k_0(j)} < s_{k_0(j+1)}$
    and $s_{k_1(j)} < s_{k_1(j+1)}$, $j\in \mathbb{N}$.  The weak$^*$~Schauder basis
    $(s_j^*)_{j=1}^\infty$ is subsymmetric, hence
    \begin{equation}\label{eq:B:estimate}
      \Big\|B\Big( \sum_{j=1}^\infty a_j s_j^* \Big)\Big\|_{S^*}
      \leq 2 K_u^2K_s\Big\|\sum_{j=1}^\infty a_j s_j^* \Big\|_{S^*}.
    \end{equation}

    Secondly, if we write $b_j = s_{k_0}(j) - s_{k_1}(j)$ for
    $k_0,k_1\in \mathcal{B}_j$, $k_0<k_1$, $j\in \mathbb{N}$, we obtain
    \begin{equation*}
      \|Q x^*\|_{S^*}
      \leq \Big\|
      \sum_{j=1}^\infty \frac{\|b_j^*\|_{S^*}}{|\mathcal{B}_j|}
      \langle s_{k_0(j)}, x^*\rangle s_j^*
      \Big\|_{S^*}
      + \Big\|
      \sum_{j=1}^\infty \frac{\|b_j^*\|_{S^*}}{|\mathcal{B}_j|}
      \langle s_{k_1(j)}, x^*\rangle s_j^*
      \Big\|_{S^*}.
    \end{equation*}
    Since $(s_j^*)_{j=1}^\infty$ is subsymmetric, the right hand side of the latter inequality is
    dominated by
    \begin{equation*}
      K_s \Big\|
      \sum_{j=1}^\infty \frac{\|b_j^*\|_{S^*}}{|\mathcal{B}_j|}
      \langle s_{k_0(j)}, x^*\rangle s_{k_0(j)}^*
      \Big\|_{S^*}
      + K_s \Big\|
      \sum_{j=1}^\infty \frac{\|b_j^*\|_{S^*}}{|\mathcal{B}_j|}
      \langle s_{k_1(j)}, x^*\rangle s_{k_1(j)}^*
      \Big\|_{S^*}.
    \end{equation*}
    Since $\frac{\|b_j^*\|_{S^*}}{|\mathcal{B}_j|}\leq 1$, unconditionality yields
    \begin{equation}\label{eq:Q:estimate}
      \|Q x^*\|_{S^*}
      \leq 2 K_s K_u \|x^*\|_{S^*}.
    \end{equation}

    One can easily verify that $Q B = \Id_{S^*}$, i.e.~the diagram
    \begin{equation}\label{eq:BQ:commutative-diagram}
      \vcxymatrix{S^* \ar[rr]^{\Id_{S^*}} \ar[dr]_{B} & & S^*\\
        & S^* \ar[ru]_{Q} & }
    \end{equation}
    is commutative.  Consequently, $B$ is an isomorphism onto its range and its
    range is complemented by $BQ$.
  \end{proofstep}

  \begin{proofstep}[Step~\theproofstep: Conclusion of the proof]
    Observe that at least one of the two following sets is infinite:
    \begin{equation*}
      \{j\in \mathbb{N} : |\langle b_j, Tb_j^*\rangle| \geq 1\}
      \qquad\text{or}\qquad
      \{j\in \mathbb{N} :
      |\langle b_j, (\Id_{S^*}-T)b_j^*\rangle| \geq 1\}.
    \end{equation*}
    If the left set is infinite we denote it by $\mathcal{J}$ and put $H=T$, and if the left set is
    finite, we denote the right set by $\mathcal{J}$ and we define $H=\Id_{S^*}-T$.  In either case,
    we obtain
    \begin{equation}\label{eq:either-or-large}
      \mathcal{J}\ \text{is infinite}
      \qquad\text{and}\qquad
      |\langle b_j, Hb_j^*\rangle| \geq 1,
      \quad j\in\mathcal{J}.
    \end{equation}
    Let $Y = B(S^*)$ and note that~\eqref{eq:B:estimate},~\eqref{eq:Q:estimate}
    and~\eqref{eq:BQ:commutative-diagram} yields
    \begin{equation}\label{eq:Y:commutative-diagram}
      \vcxymatrix{
        S^* \ar[rr]^{\Id_{S^*}} \ar[d]_B & & S^*\\
        Y \ar[rr]_{\Id_{S^*}} & & Y \ar[u]_{Q_{|Y}}
      }
      \qquad \|B\| \|Q_{|Y}\| \leq 4 K_u^3 K_s^2.
    \end{equation}

    Now, define $P : S^*\to Y$ by
    \begin{equation}\label{eq:U}
      Px^*
      =  \sum_{j\in \mathcal{J}}
      \frac{\langle b_j, x^*\rangle}
      {\langle b_j, H b_j^*\rangle} b_j^*,
      \qquad x^*\in S^*,
    \end{equation}
    and observe that by~\eqref{eq:either-or-large}, the unconditionality of $(s_j^*)_{j=1}^\infty$
    (and thus, the unconditionality of $(b_j^*)_{j=1}^\infty$) and the definition of $B$ and $Q$
    (see~\eqref{eq:BQ}), we obtain
    \begin{equation*}
      \|Px^*\|_{S^*}
      \leq 2 K_u \|BQx^*\|_{S^*},
      \qquad x^*\in S^*.
    \end{equation*}
    Combining the latter estimate with~\eqref{eq:B:estimate} and~\eqref{eq:Q:estimate} yields
    \begin{equation}\label{eq:U:estimate}
      \|Px^*\|_{S^*}
      \leq 8 K_u^4 K_s^2 \|x^*\|_{S^*},
      \qquad x^*\in S^*.
    \end{equation}

    A straightforward calculation shows that for all $y = \sum_{j\in \mathcal{J}} a_j b_j^* \in Y$,
    the following identity is true:
    \begin{equation}\label{eq:crucial-identity}
      PHy - y
      = \sum_{i\in \mathcal{J}}
      \sum_{j\in \mathcal{J} : j < i}
      a_j
      \frac{\langle b_i, H b_j^*\rangle}
      {\langle b_i, Hb_i^*\rangle}
      b_i^*
      + \sum_{i\in \mathcal{J}}
      \frac{\big\langle b_i,
        H \sum_{j\in \mathcal{J} : j > i} a_j b_j^*\big\rangle}
      {\langle b_i, Hb_i^*\rangle}
      b_i^*.
    \end{equation}
    Since $|\langle b_j, y\rangle| \leq \|b_j\|_S \|y\|_{S^*}$, we obtain $|a_j| \leq \|y\|_{S^*}$.
    Hence, by~\eqref{eq:either-or-large} and the crucial off diagonal
    estimate~\eqref{eq:proof:thm:subsymmetric:estimate:off-diag:a}, we obtain
    \begin{equation}\label{eq:U:invert:estimate:1}
      \bigg\|\sum_{i\in \mathcal{J}}
      \sum_{j\in \mathcal{J} : j < i}
      a_j
      \frac{\langle b_i, H b_j^*\rangle}
      {\langle b_i, Hb_i^*\rangle}
      b_i^*
      \bigg\|_{S^*}
      \leq 2\|y\|_{S^*} \sum_{i\in \mathbb{N}} \eta_i.
    \end{equation}
    Using~\eqref{eq:either-or-large} and the second off diagonal
    estimate~\eqref{eq:proof:thm:subsymmetric:estimate:off-diag:b}, we obtain by unconditionality
    \begin{equation}\label{eq:U:invert:estimate:2}
      \begin{aligned}
        \bigg\|\sum_{i\in \mathcal{J}} \frac{\big\langle b_i, H \sum_{j\in \mathcal{J} : j > i} a_j
          b_j^*\big \rangle} {\langle b_i, Hb_i^*\rangle} b_i^* \bigg\|_{S^*} & \leq 2 \sum_{i\in
          \mathcal{J}} \eta_i
        \Big\| \sum_{j\in \mathcal{J} : j > i} a_j b_j^*\Big\|_{S^*}\\
        & \leq 2 K_u \|y\|_{S^*} \sum_{i\in \mathbb{N}} \eta_i.
      \end{aligned}
    \end{equation}
    Recall that in~\eqref{eq:proof:thm:subsymmetric:eta} we put $\eta_i = K_u^{-1}4^{-i-1}$,
    $i\in \mathbb{N}$.  Thus, inserting the estimates~\eqref{eq:U:invert:estimate:1}
    and~\eqref{eq:U:invert:estimate:2} into~\eqref{eq:crucial-identity} yields
    \begin{equation}\label{eq:U:invert:estimate:3}
      \|PHy - y\|_{S^*} \leq \frac{1}{3} \|y\|_{S^*},
      \qquad y\in Y.
    \end{equation}

    Let $J : Y\to S^*$ denote the operator given by $Jy = y$, then,
    by~\eqref{eq:U:invert:estimate:3}, $PHJ : Y\to Y$ is invertible and
    $\|(PHJ)^{-1}\|\leq \frac{3}{2}$.  Thus, if we define $V = (PHJ)^{-1}P$, the diagram
    \begin{equation}\label{eq:U:commutative-diagram}
      \vcxymatrix{
        Y \ar[rr]^{\Id_Y} \ar[rd]_{PHJ} \ar@/_{5pt}/[dd]_{J} & & Y\\
        & Y \ar[ru]^{(PHJ)^{-1}} &\\
        S^* \ar[rr]_H & & S^* \ar[lu]_{P} \ar[uu]_{V}
      }
      \qquad \|J\| \|V\| \leq 12 K_u^4 K_s^2
    \end{equation}
    is commutative.  Merging the diagrams~\eqref{eq:Y:commutative-diagram}
    and~\eqref{eq:U:commutative-diagram} concludes the proof:
    \begin{equation}\label{eq:commutative-diagram:merged}
      \vcxymatrix{%
        S^* \ar@/_{20pt}/[ddd]_{M} \ar[rrrr]^{\Id_{S^*}} \ar[d]^{B} &&&& S^*\\
        Y \ar[rrrr]^{\Id_Y} \ar@/_{5pt}/[dd]_J \ar[rrd]_{PHJ} &&&& Y \ar[u]^{Q_{|Y}}\\
        && Y \ar[rru]^{(PHJ)^{-1}} &&\\
        S^* \ar[rrrr]_H &&&& S^* \ar[llu]_{P}
        \ar[uu]_{V} \ar@/_{20pt}/[uuu]_{N}
      }
      \qquad \|M\| \|N\| \leq 48 K_u^7 K_s^4.
      \qedhere
    \end{equation}
  \end{proofstep}
\end{myproof}

\section{Direct sums of Banach spaces with a weak$^*$~subsymmetric
  basis}\label{sec:sums-w-subsymmetric}

\noindent
We begin this section by introducing notation specific to the two parameter case.  Then we extend
the subspace annihilation lemmata in Section~\ref{sec:annihilate} to the two parameter case and use them to
prove our second main result Theorem~\ref{thm:results:primary}.

\subsection{Notation}
\label{sec:notation}

Let $X$ denote a Banach space with normalized basis $(x_j)_{j=1}^\infty$, such that their normalized
associated coordinate functionals $(x_j^*)_{j=1}^\infty$ form a weak$^*$~Schauder basis of $X^*$.
For each $i\in \mathbb{N}$, let $q_i : \ell^p(X^*)\to X^*$ denote the canonical norm $1$ coordinate
projection given by
\begin{equation*}
  y_i\bigl( (y_n)_{n=1}^\infty \bigr) = y_i,
  \qquad (y_n)_{n=1}^\infty\in\ell^p(X^*).
\end{equation*}
Let $\mathcal{I}\subset \mathbb{N}$ and define $Q_{\mathcal{I}} : \ell^p(X^*)\to \ell^p(X^*)$ as the
natural norm $1$ projection onto the coordinates indexed by $\mathcal{I}$, i.e.
\begin{equation*}
  q_i Q_{\mathcal{I}} (y_n)_{n=1}^\infty = y_i,\ i\in \mathcal{I}
  \qquad\text{and}\qquad
  q_i Q_{\mathcal{I}} (y_n)_{n=1}^\infty = 0,\ i\notin \mathcal{I},
\end{equation*}
for all $(y_n)_{n=1}^\infty\in\ell^p(X^*)$.  For $i\in \mathbb{N}$, we will sometimes write $Q_i$
instead of $Q_{\{i\}}$.  For each $i\in \mathbb{N}$, we define $I_i : X^*\to \ell^p(X^*)$ as the
canonical isometric embedding of $X^*$ into the $i$-th coordinate of $\ell^p(X^*)$, which is given
by
\begin{align*}
  q_i I_i x^*
  &= x^*
    \qquad\text{and}\qquad
    q_k I_i x^*
    = 0,
    \quad k\in\mathbb{N},\ k\neq i,
  &&i\in\mathbb{N}.
\end{align*}
We define the sequence $(e_{ij})_{i,j=1}^\infty\subset \ell^p(X^*)$ by
\begin{equation*}
  e_{ij} = I_i x_j^*,
  \qquad i,j\in \mathbb{N};
\end{equation*}
hence, the associated coordinate functionals $f_{ij} : \ell^p(X^*)\to \mathbb{R}$ are given by
\begin{equation*}
  \langle f_{ij}, y\rangle
  = \langle q_i y, x_j \rangle,
  \qquad y\in \ell^p(X^*),\ i,j\in\mathbb{N}.
\end{equation*}
Given any sequence of scalars $(a_{ij})_{i,j=1}^\infty$ for which
$(\wslim_{n\to \infty} \sum_{j=1}^n a_{ij} x_j^*)_{i=1}^\infty\in \ell^p(X^*)$, we define
\begin{equation}\label{eq:sums-topology}
  \sum_{i,j=1}^\infty a_{ij} e_{ij}
  = \Bigl(\sum_{j=1}^\infty a_{ij} x_j^*\Bigr)_{i=1}^\infty
  = \Bigl(\wslim_{n\to \infty} \sum_{j=1}^n a_{ij} x_j^*\Bigr)_{i=1}^\infty.
\end{equation}
Thus, we have the identity
\begin{equation*}
  \sum_{i,j=1}^\infty \langle f_{ij}, y \rangle e_{ij}
  = \Bigl( \sum_{j=1}^\infty \langle q_i y, x_j\rangle x_j^* \Bigr)_{i=1}^\infty
  = y,
  \qquad y\in \ell^p(X^*).
\end{equation*}

For $\mathcal{K}\subset \mathbb{N}^2$, we define the projection
$R_{\mathcal{K}} : \ell^p(X^*)\to \ell^p(X^*)$ by
\begin{equation*}
  R_{\mathcal{K}} y
  = \sum_{i,j=1}^\infty \langle f_{ij}, y\rangle \charfun_{\mathcal{K}}(i,j) e_{ij},
\end{equation*}
where $\charfun_{\mathcal{K}}(i,j) = 1$ if $(i,j)\in \mathcal{K}$, and
$\charfun_{\mathcal{K}}(i,j) = 0$ if $(i,j)\notin \mathcal{K}$.  If $(x_j^*)$ is unconditional, then
$R_{\mathcal{K}}$ is bounded.

\subsection{Two subspace annihilation lemmata for direct sums}
\label{sec:subsp-annih-lemm}
The following lemmata are two parameter versions of the subspace annihilation lemmata presented in
Section~\ref{sec:annihilate}.

\begin{lem}\label{lem:past:2d}
  Let $X$ denote a Banach space with a normalized basis $(x_j)_{j=1}^\infty$, such that their
  normalized associated coordinate functionals $(x_j^*)_{j=1}^\infty$ form a weak$^*$~Schauder basis
  of $X^*$.  Let $1 \leq p \leq \infty$ and let $\Lambda_0\subset\mathbb{N}$ denote an infinite set.
  Then, given $m\in \mathbb{N}$, $y_1,\ldots,y_m\in \ell^p(X^*)$, $n_0\in \mathbb{N}$, $\eta > 0$
  and $M\in \mathbb{N}$, there exists a finite set $\mathcal{F}\subset \Lambda_0$ with
  $|\mathcal{F}| = 2M$ such that
  \begin{equation}\label{eq:lem:past:2d}
    \max_{1\leq \ell \leq m} \bigl|
    \bigl\langle \sum_{k\in \mathcal{F}} \varepsilon_{n_0k} f_{n_0k},  y_\ell \bigr\rangle
    \bigr|
    \leq \eta,
    \qquad \varepsilon \in \mathcal{E},
  \end{equation}
  where the set of signs $\mathcal{E} = \mathcal{E}(n_0,\mathcal{F})$ is given by
  \begin{equation*}
    \mathcal{E}
    = \Bigl\{ (\varepsilon_{n_0k})_{k\in \mathcal{F}} : \varepsilon_{n_0k}\in \{\pm 1\},\
    \sum_{k\in\mathcal{F}} \varepsilon_{n_0k} = 0 \Bigr\}.
  \end{equation*}
\end{lem}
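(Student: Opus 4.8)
The plan is to reduce Lemma~\ref{lem:past:2d} to the one-parameter Lemma~\ref{lem:past}. Observe that for a fixed coordinate $n_0$, the functional $\sum_{k\in\mathcal F}\varepsilon_{n_0k}f_{n_0k}$ acts on $y_\ell\in\ell^p(X^*)$ only through its $n_0$-th coordinate: indeed, by the definition of $f_{ij}$,
\begin{equation*}
  \Bigl\langle \sum_{k\in\mathcal F}\varepsilon_{n_0k}f_{n_0k}, y_\ell\Bigr\rangle
  = \sum_{k\in\mathcal F}\varepsilon_{n_0k}\langle q_{n_0} y_\ell, x_k\rangle
  = \Bigl\langle \sum_{k\in\mathcal F}\varepsilon_{n_0k} x_k, q_{n_0}y_\ell\Bigr\rangle.
\end{equation*}
So if we set $x^*_\ell = q_{n_0}y_\ell\in X^*$ for $1\le\ell\le m$, the quantity to be controlled is exactly $\bigl|\langle\sum_{k\in\mathcal F}\varepsilon_k x_k, x^*_\ell\rangle\bigr|$ with $\varepsilon\in\mathcal E(\mathcal F)$, which is the expression bounded in Lemma~\ref{lem:past} (with the roles of basis and coordinate functionals consistent: there $\langle\sum\varepsilon_k e_k, x_j^*\rangle$, here $\langle\sum\varepsilon_k x_k, x^*_\ell\rangle$).

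The concrete steps are then: first, fix $n_0$, $m$, $y_1,\dots,y_m$, $\eta$, $M$, and the infinite set $\Lambda_0\subset\mathbb N$. Second, put $x_\ell^* = q_{n_0}y_\ell$ for $\ell=1,\dots,m$; these are $m$ elements of $X^*$. Third, apply Lemma~\ref{lem:past} to the Banach space $X$ with its normalized basis $(x_j)_{j=1}^\infty$, the infinite set $\Lambda_0$, the integer parameter $M$ in place of $m$, the vectors $x_1^*,\dots,x_m^*$ in place of $x_1^*,\dots,x_n^*$, and the tolerance $\eta$. This yields a finite set $\mathcal F\subset\Lambda_0$ with $|\mathcal F|=2M$ such that
\begin{equation*}
  \max_{1\le\ell\le m}\Bigl|\Bigl\langle\sum_{k\in\mathcal F}\varepsilon_k x_k, x_\ell^*\Bigr\rangle\Bigr|\le\eta,
  \qquad \varepsilon\in\mathcal E(\mathcal F).
\end{equation*}
Fourth, translate back via the displayed identity above: for every $\varepsilon\in\mathcal E(n_0,\mathcal F)$ (which is literally the same sign set $\mathcal E(\mathcal F)$, reindexed by writing $\varepsilon_{n_0k}$ for $\varepsilon_k$) and every $\ell$,
\begin{equation*}
  \Bigl|\Bigl\langle\sum_{k\in\mathcal F}\varepsilon_{n_0k}f_{n_0k},y_\ell\Bigr\rangle\Bigr|
  = \Bigl|\Bigl\langle\sum_{k\in\mathcal F}\varepsilon_{n_0k}x_k, q_{n_0}y_\ell\Bigr\rangle\Bigr|
  \le\eta,
\end{equation*}
which is exactly~\eqref{eq:lem:past:2d}.

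I do not anticipate a genuine obstacle here: the lemma is a notational repackaging of Lemma~\ref{lem:past}, and the only thing requiring care is bookkeeping — checking that $p$ plays no role (it does not, since $q_{n_0}$ is a norm-one map $\ell^p(X^*)\to X^*$ for every $1\le p\le\infty$, and the conclusion involves only the single coordinate $q_{n_0}y_\ell$), that the cardinality $2M$ matches, and that the sign condition $\sum_{k\in\mathcal F}\varepsilon_{n_0k}=0$ is precisely the defining condition of $\mathcal E(\mathcal F)$ in Lemma~\ref{lem:past}. If one wants uniformity over all $n_0$ simultaneously it would require an extra nested-selection argument as in the proof of Lemma~\ref{lem:past}, but the statement here only asks for a single fixed $n_0$, so the one-step reduction above suffices.
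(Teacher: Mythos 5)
Your proposal is correct and is exactly the argument the paper has in mind: the paper omits the proof of Lemma~\ref{lem:past:2d} with the remark that it is ``completely analogous'' to Lemma~\ref{lem:past}, and your reduction via $x_\ell^* = q_{n_0}y_\ell$ (using $\langle f_{n_0k},y_\ell\rangle=\langle x_k,q_{n_0}y_\ell\rangle$) makes that analogy precise as a formal one-step reduction, with the parameter bookkeeping ($M$ for the cardinality, $m$ for the number of vectors, $p$ irrelevant) handled correctly.
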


\begin{proof}
  Since the proof of Lemma~\ref{lem:past:2d} is completely analogous to that of Lemma~\ref{lem:past}, we will
  omit it.
\end{proof}

\begin{lem}\label{lem:future:2d-new}
  Let $U$ denote a Banach space with a normalized unconditional basis $(u_j)_{j=1}^\infty$, such
  that their normalized associated coordinate functionals $(u_j^*)_{j=1}^\infty$ form an
  unconditional weak$^*$~Schauder basis of $U^*$.  Let $U^*$ satisfy
  condition~\eqref{eq:condition-c}, $1 \leq p \leq \infty$, and let $\Lambda_i\subset\mathbb{N}$,
  $i\in\mathbb{N}$ denote infinite sets.  Let $\varphi\in (\ell^p(U^*))^*$ and $\eta > 0$.  Then
  there exist infinite sets and $\mathcal{J}_i\subset\Lambda_i$, $i\in\mathbb{N}$ such that
  \begin{equation}\label{eq:lem:future:2d-new}
    \sup_{\|y\|_{\ell^p(U^*)}\leq 1} |\langle \varphi, R_{\{i\}\times \mathcal{J}_i} y\rangle|
    \leq \eta,
    \qquad i\in\mathbb{N}.
  \end{equation}
\end{lem}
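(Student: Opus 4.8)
The plan is to reduce Lemma~\ref{lem:future:2d-new} to a single application of the one-parameter Lemma~\ref{lem:future} on the Banach space $U$ itself, by a diagonal extraction over the coordinates $i\in\mathbb{N}$ of $\ell^p(U^*)$. First I observe that for each fixed $i$, the map $x^*\mapsto \langle\varphi, I_i x^*\rangle$ defines an element $\varphi_i\in U^{**}$ with $\|\varphi_i\|_{U^{**}}\leq\|\varphi\|_{(\ell^p(U^*))^*}$, because $I_i : U^*\to\ell^p(U^*)$ is an isometric embedding. The key point linking the two-parameter projection to the one-parameter one is the identity $q_i R_{\{i\}\times\mathcal{A}} y = P_{\mathcal{A}}(q_i y)$ together with $q_k R_{\{i\}\times\mathcal{A}} y = 0$ for $k\neq i$; consequently $\langle\varphi, R_{\{i\}\times\mathcal{A}} y\rangle = \langle\varphi_i, P_{\mathcal{A}}(q_i y)\rangle$, and since $\|q_i y\|_{U^*}\leq\|y\|_{\ell^p(U^*)}$, controlling $\sup_{\|x^*\|_{U^*}\leq1}|\langle\varphi_i, P_{\mathcal{A}} x^*\rangle|$ automatically controls the left side of~\eqref{eq:lem:future:2d-new}.

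The construction is then an inductive diagonal procedure. I would start with the given infinite sets $\Lambda_i$. At stage $1$, apply Lemma~\ref{lem:future} with the Banach space $U$, the functional $\varphi_1\in U^{**}$, the infinite set $\Lambda_1$, and the tolerance $\eta$, obtaining an infinite $\mathcal{J}_1\subset\Lambda_1$ with $\sup_{\|x^*\|_{U^*}\leq1}|\langle\varphi_1, P_{\mathcal{J}_1}x^*\rangle|\leq\eta$. At stage $i$, having produced $\mathcal{J}_1,\ldots,\mathcal{J}_{i-1}$, apply Lemma~\ref{lem:future} with $\varphi_i$, the set $\Lambda_i$, and tolerance $\eta$ to get an infinite $\mathcal{J}_i\subset\Lambda_i$ with $\sup_{\|x^*\|_{U^*}\leq1}|\langle\varphi_i, P_{\mathcal{J}_i}x^*\rangle|\leq\eta$. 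Since the $i$-th estimate only involves $\varphi_i$ and $\mathcal{J}_i\subset\Lambda_i$, and the $\Lambda_i$ are prescribed in advance, the selections do not interfere with one another at all — there is no nesting constraint to juggle, so even a coordinatewise (non-nested) choice works. Finally, for each $i$ and each $y$ with $\|y\|_{\ell^p(U^*)}\leq1$, the displayed identity gives $|\langle\varphi, R_{\{i\}\times\mathcal{J}_i}y\rangle| = |\langle\varphi_i, P_{\mathcal{J}_i}(q_i y)\rangle|\leq\|q_i y\|_{U^*}\cdot\eta\leq\eta$, which is precisely~\eqref{eq:lem:future:2d-new}.

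I expect the main (really the only) obstacle to be the bookkeeping around the mode of convergence: one must check that $R_{\{i\}\times\mathcal{A}}$ is well-defined and bounded on $\ell^p(U^*)$ — which holds by the unconditionality hypothesis as noted after the definition of $R_{\mathcal{K}}$ — and that the weak$^*$ identities $q_i R_{\{i\}\times\mathcal{A}} y = P_{\mathcal{A}}(q_i y)$ and the duality pairing $\langle\varphi, I_i x^*\rangle$ behave as expected, given that the series defining elements of $\ell^p(U^*)$ converge weak$^*$ in each coordinate. This is routine once one unwinds the definitions of $f_{ij}$, $I_i$, $q_i$ and $R_{\mathcal{K}}$ from Section~\ref{sec:notation}, since $R_{\{i\}\times\mathcal{A}}$ acts only in the $i$-th coordinate and there agrees with $P_{\mathcal{A}}$. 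No new analytic difficulty arises beyond what Lemma~\ref{lem:future} already supplies; the two-parameter statement is genuinely just the one-parameter statement applied coordinate by coordinate.
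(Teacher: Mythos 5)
Your proof is correct. The key identity $R_{\{i\}\times\mathcal{A}}\,y = I_i\bigl(P_{\mathcal{A}}(q_i y)\bigr)$, hence $\langle \varphi, R_{\{i\}\times\mathcal{A}}\,y\rangle = \langle \varphi\circ I_i, P_{\mathcal{A}}(q_i y)\rangle$ with $\varphi\circ I_i\in U^{**}$ of norm at most $\|\varphi\|$ and $\|q_i y\|_{U^*}\le\|y\|_{\ell^p(U^*)}$, does exactly what you claim, and since the sets $\mathcal{J}_i\subset\Lambda_i$ are chosen independently for each $i$ there is indeed no diagonalization to perform. The paper reaches the same conclusion by a slightly different packaging: it argues by contradiction, observing that if the lemma fails then (because the coordinates are independent) there is a single bad index $i_0$ for which \emph{no} infinite $\mathcal{A}\subset\Lambda_{i_0}$ works, and then reruns the condition~\eqref{eq:condition-c} contradiction of Lemma~\ref{lem:future} inside that one coordinate, with $u_j^*=q_{i_0}y_j$. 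So the underlying mechanism is identical; your version is the cleaner one, since it treats Lemma~\ref{lem:future} as a black box applied to the functionals $\varphi_i=\varphi\circ I_i$ rather than re-deriving its proof in the $\ell^p(U^*)$ setting, at the small cost of having to verify the intertwining identity between $R_{\{i\}\times\mathcal{A}}$, $I_i$, $q_i$ and $P_{\mathcal{A}}$ explicitly.
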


\begin{proof}
  If we assume Lemma~\ref{lem:future:2d-new} is false, then there exists an $i_0\in\mathbb{N}$ such that
  \begin{equation}\label{eq:proof:lem:future:2d-new:1}
    \sup_{\|y\|_{\ell^p(U^*)}\leq 1} |\langle \varphi, R_{\{i_0\}\times \mathcal{A}} y\rangle|
    > \eta,
  \end{equation}
  for every infinite set $\mathcal{A}\subset\Lambda_{i_0}$.  We define
  $\theta = \frac{\eta}{2\|\varphi\|_{(\ell^p(U^*))^*}K_u}$ and choose a sequence
  $(\mathcal{A}_j)_{j=1}^\infty$ of pairwise disjoint, infinite subsets of $\mathcal{J}$ according
  to~\eqref{eq:condition-c}.  By~\eqref{eq:proof:lem:future:2d-new:1}, we can find a sequence
  $(y_j)_{j=1}^\infty$ in $\ell^p(U^*)$ with $\|y_j\| \leq 1$, $j\in\mathbb{N}$ such that
  \begin{equation}\label{eq:proof:lem:future:2d-new:2}
    \langle \varphi, R_{\{i_0\}\times \mathcal{A}_j} y_j\rangle
    > \eta,
    \qquad j\in\mathbb{N}.
  \end{equation}
  Next, define $u_j^* = q_{i_0} y_j$, $j\in\mathbb{N}$ and choose $(a_j)_{j=1}^\infty\in\ell^1$ with
  $\|(a_j)_{j=1}^\infty\|_{\ell^1} = 1$ according to~\eqref{eq:condition-c} such that
  \begin{equation}\label{eq:proof:lem:future:2d-new:3}
    \Big\| \sum_{j=1}^\infty a_j P_{\mathcal{A}_j} u_j^* \Big\|_{U^*}
    \leq \theta
    = \frac{\eta}{2\|\varphi\|_{(\ell^p(U^*))^*}K_u}.
  \end{equation}
  By~\eqref{eq:proof:lem:future:2d-new:2}, we obtain an integer $n\in\mathbb{N}$ such that
  \begin{equation}\label{eq:proof:lem:future:2d-new:4}
    \eta
    < \langle \varphi, \sum_{j=1}^n a_j R_{\{i_0\}\times \mathcal{A}_j} y_j\rangle
    \leq \|\varphi\|_{(\ell^p(U^*))^*} \Bigl\| \sum_{j=1}^n a_j R_{\{i_0\}\times \mathcal{A}_j} y_j \Bigr\|_{\ell^p(U^*)}.
  \end{equation}
  Observe that
  $\sum_{j=1}^n a_j R_{\{i_0\}\times \mathcal{A}_j} y_j = I_{i_0}\sum_{j=1}^n a_j
  P_{\mathcal{A}_j} u_j$, hence, combining~\eqref{eq:proof:lem:future:2d-new:3}
  and~\eqref{eq:proof:lem:future:2d-new:4} leads to the contradiction
  \begin{equation}\label{eq:proof:lem:future:2d-new:5}
    \eta
    \leq \|\varphi\|_{(\ell^p(U^*))^*}
    \Bigl\| \sum_{j=1}^n a_j P_{\mathcal{A}_j} u_j \Bigr\|_{U^*}
    \leq \eta/2.
    \qedhere
  \end{equation}
\end{proof}

\subsection{Proof of main result Theorem~\ref{thm:results:primary}}
\label{sec:proof-main-result}
In this section we prove our second main result Theorem~\ref{thm:results:primary}, which we repeat
here below (see Theorem~\ref{thm:primary}).  The proof involves inductively constructing a block
basis in each coordinate of $\ell^p(S^*)$, $1 \leq p \leq \infty$.  The order by which we proceed is
determined by the linear order $\prec$ on $\mathbb{N}^2$, which is defined as follows: Let $<_\ell$
denote the lexicographic order on $\mathbb{N}^2$ and define
\begin{equation*}
  (i_0,j_0) \prec (i_1,j_1)
  \quad\text{if and only if}\quad
  (i_0+j_0,i_0) <_\ell (i_1+j_1,i_1),
\end{equation*}
for all $(i_0,j_0), (i_1,j_1)\in\mathbb{N}^2$ (see Figure~\ref{fig:order}).  Let
$\mathcal{O}_\prec : \mathbb{N}^2\to \mathbb{N}$ denote the unique bijective function that preserves
the order $\prec$, i.e.
\begin{equation*}
  \mathcal{O}_\prec((i_0,j_0)) < \mathcal{O}_\prec((i_1,j_1))
  \quad\text{if and only if}\quad
  (i_0,j_0) \prec (i_1,j_1),
\end{equation*}
for all $(i_0,j_0),(i_1,j_1)\in \mathbb{N}^2$.

\begin{figure}[bth]
  \begin{center}
    \includegraphics{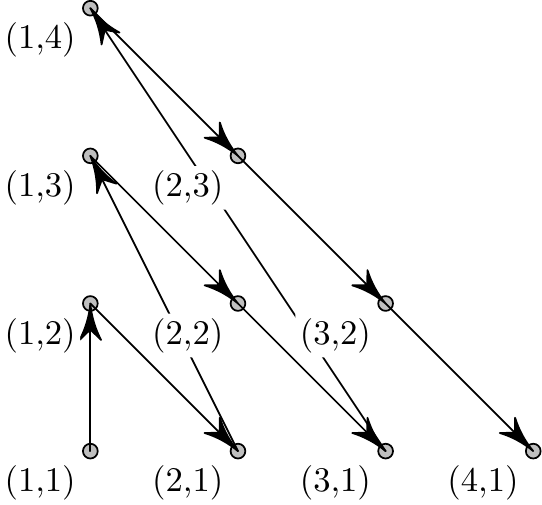}
  \end{center}
  \caption{The first 10 Elements of $\mathbb{N}^2$ with respect to the order
    $\prec$.}\label{fig:order}
\end{figure}

\begin{thm}[Main result Theorem~\ref{thm:results:primary}]\label{thm:primary}
  Let $S^*$ denote a Banach space with a subsymmetric weak$^*$~Schauder basis satisfying
  condition~\eqref{eq:condition-c}.  Then for all $1 \leq p \leq \infty$ and any given bounded
  operator $T : \ell^p(S^*)\to \ell^p(S^*)$, there exist operators
  $M,N : \ell^p(S^*)\to \ell^p(S^*)$ such that for $H=T$ or $H = \Id_{\ell^p(S^*)}-T$, the diagram
  \begin{equation}\label{eq:thm:primary}
    \vcxymatrix{\ell^p(S^*) \ar[r]^{\Id_{\ell^p(S^*)}} \ar[d]_{M} & \ell^p(S^*)\\
      \ell^p(S^*) \ar[r]_H & \ell^p(S^*) \ar[u]_{N}
    }
    \qquad \|M\| \|N\|\leq 48 K_u^7 K_s^4,
  \end{equation}
  is commutative.  Consequently, $\ell^p(S^*)$ is primary.
\end{thm}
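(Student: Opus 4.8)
The plan is to run the scheme of the proof of Theorem~\ref{thm:subsymmetric} in two parameters: one processes the pairs $(i,j)\in\mathbb N^2$ in the well-order determined by $\mathcal{O}_\prec$, and builds a normalized block basis of $\ell^p(S^*)$ consisting of one block per coordinate. The one-parameter annihilation Lemmas~\ref{lem:past} and~\ref{lem:future} are replaced throughout by the two-parameter Lemmas~\ref{lem:past:2d} and~\ref{lem:future:2d-new}. Fix $\eta_n=K_u^{-1}4^{-n-1}$.

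First I would carry out the inductive construction (Step~1). By induction on $n=\mathcal{O}_\prec(i,j)$ one chooses pairwise disjoint two-element sets $\mathcal B_{ij}\subset\mathbb N$, successive within each column (i.e.\ $\max\mathcal B_{ij}<\min\mathcal B_{i,j+1}$), the blocks $b_{ij}=s_{k_0}-s_{k_1}$, $b_{ij}^*=s_{k_0}^*-s_{k_1}^*$ for $\{k_0,k_1\}=\mathcal B_{ij}$, $k_0<k_1$, and the vectors $\beta_{ij}^*=I_ib_{ij}^*\in\ell^p(S^*)$, $\beta_{ij}=f_{i,k_0}-f_{i,k_1}\in(\ell^p(S^*))^*$, so that $\langle\beta_{ij},\beta_{i'j'}^*\rangle=2\delta_{(i,j),(i',j')}$. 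Carrying along a family $(\Lambda_i)_{i\in\mathbb N}$ of infinite sets that is decreasing in $n$, at step $n$ one uses Lemma~\ref{lem:past:2d} in the $i$-th coordinate to pick $\mathcal B_{ij}\subset\Lambda_i$ making the (finitely many) $\prec$-past interactions $\sum_{(i',j')\prec(i,j)}|\langle\beta_{ij},T\beta_{i'j'}^*\rangle|\le\eta_n$ small, then uses Lemma~\ref{lem:future:2d-new} on $\varphi=T^*\beta_{ij}$ (sharpened, when $p>1$, by a further per-coordinate application of the one-parameter Lemma~\ref{lem:future} to $I_{i'}^*T^*\beta_{ij}\in S^{**}$ with weight $\eta_n2^{-i'}$), and finally discards the small indices from every $\Lambda_{i'}$; all sets remain infinite. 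Since $\mathcal{O}_\prec$ is a bijection, $\sum_{(i,j)}\eta_{\mathcal{O}_\prec(i,j)}=\sum_n\eta_n$, and the construction yields, for every $(i,j)$ with $n=\mathcal{O}_\prec(i,j)$ and every admissible tail,
\begin{gather*}
  \sum_{(i',j')\prec(i,j)}\bigl|\langle\beta_{ij},T\beta_{i'j'}^*\rangle\bigr|\le\eta_n,\\
  \Bigl|\Bigl\langle\beta_{ij},T\sum_{(i',j')\succ(i,j)}a_{i'j'}\beta_{i'j'}^*\Bigr\rangle\Bigr|
  \le\eta_n\Bigl\|\sum_{(i',j')\succ(i,j)}a_{i'j'}\beta_{i'j'}^*\Bigr\|_{\ell^p(S^*)};
\end{gather*}
for the second estimate one notes that the whole $\prec$-future of $(i,j)$ is supported on the frozen grid $\bigcup_{i'}\{i'\}\times\Lambda_{i'}^{(n)}$, decomposes the tail into its columns, applies the sharpened bounds, and sums over $i'$ by Hölder's inequality ($(2^{-i'})_{i'}\in\ell^{p'}$).

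Steps~2 and~3 then proceed as in Theorem~\ref{thm:subsymmetric}. Define $B,Q:\ell^p(S^*)\to\ell^p(S^*)$ by $B(\sum a_{ij}e_{ij})=\sum\|b_{ij}^*\|_{S^*}^{-1}a_{ij}\beta_{ij}^*$ and $Qy=\sum\|b_{ij}^*\|_{S^*}|\mathcal B_{ij}|^{-1}\langle\beta_{ij},y\rangle e_{ij}$; since each block lives in a single coordinate and the column indices are increasing, the estimates~\eqref{eq:B:estimate} and~\eqref{eq:Q:estimate} apply coordinatewise and pass to the $\ell^p$-sum, giving $\|B\|\le2K_u^2K_s$, $\|Q\|\le2K_uK_s$ and $QB=\Id_{\ell^p(S^*)}$, so $Y:=B(\ell^p(S^*))\cong\ell^p(S^*)$ is complemented. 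For every $(i,j)$ one has $|\langle\beta_{ij},T\beta_{ij}^*\rangle|+|\langle\beta_{ij},(\Id-T)\beta_{ij}^*\rangle|\ge|\langle\beta_{ij},\beta_{ij}^*\rangle|=2$; hence in each column one of the two index sets is infinite, and a second pigeonhole over the columns produces $H\in\{T,\Id_{\ell^p(S^*)}-T\}$, an infinite $\mathcal I\subset\mathbb N$ and infinite $\mathcal K_i$ ($i\in\mathcal I$) with $|\langle\beta_{ij},H\beta_{ij}^*\rangle|\ge1$ on $\mathcal J:=\bigcup_{i\in\mathcal I}\{i\}\times\mathcal K_i$. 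By subsymmetry the weak$^*$-closed span $Z$ of $\{\beta_{ij}^*:(i,j)\in\mathcal J\}$ is the $\ell^p$-sum over $i\in\mathcal I$ of spaces uniformly isomorphic to $S^*$, hence $Z\cong\ell^p(S^*)$. Putting $Py=\sum_{(i,j)\in\mathcal J}\langle\beta_{ij},H\beta_{ij}^*\rangle^{-1}\langle\beta_{ij},y\rangle\beta_{ij}^*$ gives $\|P\|\le8K_u^4K_s^2$ as in~\eqref{eq:U:estimate}, and the identity~\eqref{eq:crucial-identity} (with the order $\prec$ in place of the usual order, the $\prec$-past part being a finite sum) together with the two off-diagonal estimates yields $\|PHz-z\|\le\tfrac13\|z\|$ for $z\in Z$. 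Thus $PHJ:Z\to Z$ is invertible with $\|(PHJ)^{-1}\|\le\tfrac32$, where $J:Z\hookrightarrow\ell^p(S^*)$, and merging the diagrams as in~\eqref{eq:commutative-diagram:merged} furnishes $M,N$ with $\|M\|\,\|N\|\le48K_u^7K_s^4$. Finally $H(\ell^p(S^*))$ contains a complemented isomorphic copy of $\ell^p(S^*)$ and is itself complemented; since $\ell^p(S^*)\cong\ell^p(\ell^p(S^*))$, Pe\l czy\'nski's decomposition method gives $H(\ell^p(S^*))\cong\ell^p(S^*)$, so $\ell^p(S^*)$ is primary.

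The hard part is the future-annihilation in Step~1. Unlike in the one-parameter case, where the whole future of an index is supported on a single set reached by a bounded coordinate projection, here the $\prec$-future of $(i,j)$ is spread over infinitely many coordinates of $\ell^p(S^*)$, so one must make $T^*\beta_{ij}$ small on the \emph{entire} frozen grid $\bigcup_{i'}\{i'\}\times\Lambda_{i'}^{(n)}$, simultaneously for all $(i,j)$, with an error summable in $n$. Lemma~\ref{lem:future:2d-new} supplies the simultaneous thinning of all the $\Lambda_{i'}$, and for $1<p<\infty$ the per-coordinate refinement with weights $2^{-i'}$ makes the column contributions $\ell^{p'}$-summable; the case $p=\infty$ is the most delicate, since there the column decomposition of a general element of $\ell^\infty(S^*)$ need not converge in norm and needs a separate argument. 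A secondary point, absent in the one-parameter setting, is that the pigeonhole in Step~3 must deliver a full rectangular index family $\bigcup_{i\in\mathcal I}\{i\}\times\mathcal K_i$ with $\mathcal I$ and all $\mathcal K_i$ infinite, so that the surviving block basis spans a genuine copy of $\ell^p(S^*)$ rather than merely an infinite-dimensional complemented subspace.
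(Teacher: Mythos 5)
Your proposal follows the paper's proof essentially step for step: the same well-order $\prec$ on $\mathbb{N}^2$, the same two-parameter annihilation Lemmas~\ref{lem:past:2d} and~\ref{lem:future:2d-new}, the same operators $B$ and $Q$ with the same bounds, the same double pigeonhole producing a rectangular surviving index set $\bigcup_{i\in\mathcal I}\{i\}\times\mathcal K_i$ (so that the surviving blocks again span a copy of $\ell^p(S^*)$), the same Neumann-series inversion of $PHJ$, and the same Pe{\l}czy{\'n}ski finish. The one place where you deviate is also the one place you rightly single out as delicate: the future annihilation. The paper applies Lemma~\ref{lem:future:2d-new} and then passes from the per-coordinate bound~\eqref{eq:proof:thm:primary:induction:hypothesis:e} directly to the tail estimate~\eqref{eq:proof:thm:primary:U:invert:estimate:2}, without carrying out the summation over coordinates that you worry about; your weighted refinement (applying Lemma~\ref{lem:future} to $I_{i'}^*T^*\beta_{ij}$ with tolerance $\eta_n 2^{-i'}$ and summing the column contributions by H\"older against $(2^{-i'})_{i'}\in\ell^{p'}$) is a sound way to make that passage explicit, and it works for $1\le p<\infty$. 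What remains open in your write-up is exactly what you flag and then defer: for $p=\infty$ the coordinatewise decomposition of the tail converges only weak$^*$, and $T^*\beta_{ij}$ need not be weak$^*$-continuous, so the interchange $\langle T^*\beta_{ij},z\rangle=\sum_{i'}\langle T^*\beta_{ij},Q_{i'}z\rangle$ is not justified; announcing ``a separate argument'' without supplying it leaves the $p=\infty$ case of the theorem unproved as written. It is worth noting that the paper's own proof does not confront this point at all, so at the only genuinely two-parameter step of the argument your proposal is more careful than the source; to close it completely you would need either a version of Lemma~\ref{lem:future:2d-new} that controls the full grid projection $R_{\bigcup_{i}\{i\}\times\mathcal J_i}$ at once, or an argument exploiting that the relevant tails are weak$^*$ limits of their finitely supported partial sums together with a uniform bound on those partial sums.
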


\begin{myproof}
  For the definition of $e_{ij}$ and $f_{ij}$ we refer to Section~\ref{sec:notation}.  Within this
  proof, we identify
  \begin{equation*}
    \eta_k \leftrightarrow \eta_{k_0k_1},
    \qquad
    \mathcal{B}_k \leftrightarrow \mathcal{B}_{k_0k_1},
    \qquad
    b_k \leftrightarrow b_{k_0k_1},
    \qquad
    b_k^* \leftrightarrow b_{k_0k_1}^*,
  \end{equation*}
  whenever $\mathcal{O}_\prec((k_0,k_1)) = k$.  We conclude this preliminary step by defining
  \begin{equation}\label{eq:proof:thm:primary:eta}
    \eta_k = K_u^{-1}4^{-k-1},
    \qquad k\in \mathbb{N}.
  \end{equation}

  \begin{proofstep}[Step~\theproofstep: Inductive construction]
    We will now inductively (with respect to the order $\prec$ on $\mathbb{N}^2$) construct
    sequences $(b_{ij}^{(\varepsilon)})_{i,j=1}^\infty$ and
    $(b_{ij}^{*(\varepsilon)})_{i,j=1}^\infty$.  We begin by putting $\mathcal{J}_{1,i}=\mathbb{N}$,
    $i\in\mathbb{N}$, $\mathcal{B}_1=\{(1,1),(1,2)\}$ and define
    \begin{equation}\label{eq:proof:thm:primary:induction:begin:1}
      b_{11}^{(\varepsilon)} = e_{11} - e_{12}
      \qquad\text{and}\qquad
      b_{11}^{*(\varepsilon)} = f_{11} - f_{12}.
    \end{equation}
    By Lemma~\ref{lem:future:2d-new}, there exist infinite sets
    $\mathcal{J}_{2,i}\subset\mathbb{N}$, $i\in\mathbb{N}$ with
    \begin{equation}\label{eq:proof:thm:primary:induction:begin:2}
      \mathcal{J}_{2,i}\subset \mathcal{J}_{1,i}\setminus \{1,2\},
    \end{equation}
    such that
    \begin{equation}\label{eq:proof:thm:primary:induction:begin:3}
      \sup_{\|y\|_{\ell^p(S^*)}\leq 1}
      |\langle T^* b_1^{*(\varepsilon)}, R_{\{i\}\times\mathcal{J}_{2,i}} y\rangle|
      \leq \eta_1,
      \qquad i\in\mathbb{N}.
    \end{equation}

    Let us assume that we have already
    \begin{subequations}\label{eq:proof:thm:primary:induction:hypothesis}
      \begin{itemize}
      \item selected infinite sets $\mathcal{J}_{k,i}\subset \mathbb{N}$, $i\in\mathbb{N}$,
        $1\leq k \leq K$ and finite sets $\mathcal{B}_k\subset\mathbb{N}^2$ with
        $|\mathcal{B}_k|=2$, $1\leq k \leq K-1$ such that
        \begin{equation}\label{eq:proof:thm:primary:induction:hypothesis:a}
          \mathcal{B}_k
          = \mathcal{B}_{k_0k_1}
          \subset (\{k_0\}\times\mathcal{J}_{k,k_0}) \setminus(\{k_0\}\times\mathcal{J}_{k+1,k_0}),
        \end{equation}
        whenever $1 \leq k \leq K-1$ and $\mathcal{O}_\prec((k_0,k_1)) = k$, and that
        \begin{equation}\label{eq:proof:thm:primary:induction:hypothesis:b}
          \mathcal{J}_{k+1,i}
          \subset \mathcal{J}_{k,i}\mathbin{\Big\backslash}
          \Bigl\{ j\in \mathbb{N} :
          j \leq \max\bigl\{ j'\in \mathbb{N} :
          \exists i'\in\mathbb{N},\ (i',j')\in\bigcup_{\ell=1}^k\mathcal{B}_\ell
          \bigr\}
          \Bigr\},
        \end{equation}
        for all $i\in\mathbb{N}$ and $1 \leq k \leq K-1$;

      \item defined $b_k^{(\varepsilon)}$, $b_k^{*(\varepsilon)}$ by
        \begin{equation}\label{eq:proof:thm:primary:induction:hypothesis:c}
          b_k^{(\varepsilon)}
          = e_{k_0 j} - e_{k_0 j'}
          \qquad\text{and}\qquad
          b_k^{*(\varepsilon)}
          = f_{k_0 j} - f_{k_0 j'},
        \end{equation}
        whenever $1\leq k \leq K-1$, $\mathcal{O}_\prec((k_0,k_1)) = k$ and
        $(k_0,j), (k_0,j')\in\mathcal{B}_k=\mathcal{B}_{k_0k_1}$ with $j<j'$;

      \item obtained the estimates
        \begin{align}
          \sum_{\ell=1}^{k-1} |\langle b_k^{*(\varepsilon)}, T b_\ell^{(\varepsilon)}\rangle|
          &\leq \eta_k,
            \label{eq:proof:thm:primary:induction:hypothesis:d}\\
          \sup_{\|y\|_{\ell^p(S^*)}\leq 1}
          \bigl| \bigl\langle
          T^* b_k^{*(\varepsilon)}, R_{\{i\}\times\mathcal{J}_{k+1,i}} y
          \bigr\rangle\bigr|
          &\leq \eta_k,
          &&i\in\mathbb{N},
             \label{eq:proof:thm:primary:induction:hypothesis:e}
        \end{align}
        for all $1\leq k \leq K-1$.
      \end{itemize}
    \end{subequations}
    We now come to the inductive step.  To this end, choose $(K_0,K_1)\in\mathbb{N}^2$ with
    $\mathcal{O}_\prec((K_0,K_1)) = K$.  The set $\mathcal{J}_{K,K_0}$ is infinite
    (see~\eqref{eq:proof:thm:primary:induction:hypothesis:a}), hence, by Lemma~\ref{lem:past:2d}
    there exists a set $\mathcal{F}\subset \mathcal{J}_{K,K_0}$ with $|\mathcal{F}| = 2$ such that
    \begin{equation}\label{eq:proof:thm:primary:induction:step:1}
      \sum_{\ell=1}^{K-1} \bigl| \bigl\langle
      f_{K_0 j} - f_{K_0 j'},
      Tb_\ell^{(\varepsilon)}
      \bigr\rangle \bigr|
      \leq \eta_K,
      \qquad j,j'\in \mathcal{F}.
    \end{equation}
    We define the collection
    \begin{equation}\label{eq:proof:thm:primary:induction:step:2}
      \mathcal{B}_K
      = \mathcal{B}_{K_0K_1}
      = \{K_0\}\times \mathcal{F},
    \end{equation}
    and note that (with the possible exception of
    $\mathcal{B}_K\cap (\{K_0\}\times\mathcal{J}_{K+1,K_0}) = \emptyset$)
    condition~\eqref{eq:proof:thm:primary:induction:hypothesis:a} holds true for $K+1$.  We will
    take care of the possible exception by appropriately choosing $\mathcal{J}_{K+1,i}$,
    $i\in\mathbb{N}$ at a later stage of the proof.  Next, we define the functions
    $b_K^{(\varepsilon)}$, $b_K^{*(\varepsilon)}$ and the signs $\varepsilon_{K_0,j}$,
    $j\in \mathcal{F}$ by
    \begin{equation}\label{eq:proof:thm:primary:induction:step:3}
      b_K^{(\varepsilon)}
      = e_{K_0 j} - e_{K_0 j'},
      \qquad
      b_K^{*(\varepsilon)}
      = f_{K_0 j} - f_{K_0 j'},
      \qquad j,j'\in \mathcal{F}, j < j',
    \end{equation}
    which is in accordance with~\eqref{eq:proof:thm:primary:induction:hypothesis:c}.
    Combining~\eqref{eq:proof:thm:primary:induction:step:1}
    with~\eqref{eq:proof:thm:primary:induction:step:3} yields
    \begin{equation}\label{eq:proof:thm:primary:induction:step:3}
      \sum_{\ell=1}^{K-1}
      \bigl| \bigl\langle b_K^{*(\varepsilon)}, Tb_\ell^{(\varepsilon)} \bigr\rangle \bigr|
      \leq \eta_K,
    \end{equation}
    showing that~\eqref{eq:proof:thm:primary:induction:hypothesis:d} holds for $K+1$, as required.
    Since all the sets $\mathcal{B}_k$, $1\leq k\leq K$ are finite, we can apply
    Lemma~\ref{lem:future:2d-new} to find infinite sets $\mathcal{J}_{K+1,i}$, $i\in\mathbb{N}$ with
    \begin{equation}\label{eq:proof:thm:primary:induction:step:4}
      \mathcal{J}_{K+1,i}
      \subset \mathcal{J}_{K,i}\mathbin{\Big\backslash}
      \Bigl\{ j\in \mathbb{N} :
      j \leq \max\bigl\{ j'\in \mathbb{N} :
      \exists i'\in\mathbb{N},\ (i',j')\in\bigcup_{\ell=1}^K\mathcal{B}_\ell
      \bigr\}
      \Bigr\},
    \end{equation}
    such that the following estimate is satisfied:
    \begin{equation}\label{eq:proof:thm:primary:induction:step:5}
      \sup_{\|y\|_{\ell^p(S^*)}\leq 1}
      \bigl| \bigl\langle
      T^* b_K^{*(\varepsilon)}, R_{\{i\}\times\mathcal{J}_{K+1,i}} y
      \bigr\rangle\bigr|
      \leq \eta_K,
      \qquad i\in\mathbb{N},
    \end{equation}
    Note that~\eqref{eq:proof:thm:primary:induction:step:4}
    and~\eqref{eq:proof:thm:primary:induction:step:5}
    imply~\eqref{eq:proof:thm:primary:induction:hypothesis:b}
    and~\eqref{eq:proof:thm:primary:induction:hypothesis:e} for $K+1$.
    Moreover,~\eqref{eq:proof:thm:primary:induction:step:4} also implies
    $\mathcal{B}_K\cap (\{K_0\}\times\mathcal{J}_{K+1,K_0}) = \emptyset$, which completes the
    missing part of~\eqref{eq:proof:thm:primary:induction:hypothesis:a} for $K+1$
    (see~\eqref{eq:proof:thm:primary:induction:step:2}).
    
    Altogether, we completed the inductive step, and we know
    that~\eqref{eq:proof:thm:primary:induction:hypothesis} is true for $K+1$.
  \end{proofstep}

  \begin{proofstep}[Step~\theproofstep: Basic operators]
    We define the operators $B,Q : \ell^p(S^*)\to \ell^p(S^*)$ by
    \begin{subequations}\label{eq:proof:thm:primary:BQ}
      \begin{align}
        B y
        & = \sum_{i,j=1}^\infty \langle f_{ij}, y \rangle \frac{1}{\|b_{ij}^{(\varepsilon)}\|_{\ell^p(S^*)}} b_{ij}^{(\varepsilon)},
          \qquad y\in \ell^p(S^*),
          \label{eq:proof:thm:primary:BQ:a}\\
        Q y
        & = \sum_{i,j=1}^\infty
          \frac{\|b_{ij}^{(\varepsilon)}\|_{\ell^p(S^*)}}{|\mathcal{B}_{ij}|}
          \langle b_{ij}^{*(\varepsilon)}, y \rangle e_{ij},
          \qquad y\in \ell^p(S^*).
          \label{eq:proof:thm:primary:BQ:b}
      \end{align}
    \end{subequations}
    For the mode of convergence of the above series, we refer to~\eqref{eq:sums-topology}.

    We will now show that $B$ and $Q$ are bounded linear operators.  To this end, let
    $y\in \ell^p(S^*)$ and observe that by~\eqref{eq:proof:thm:primary:induction:hypothesis:c},
    $\|By\|_{\ell^p(S^*)}$ is dominated by
    \begin{equation*}
      \Big\|
      \sum_{i,j=1}^\infty \langle f_{ij}, y \rangle \frac{1}{\|b_{ij}^{(\varepsilon)}\|_{\ell^p(S^*)}} e_{k_{ij} \ell_{ij}}
      \Big\|_{\ell^p(S^*)}
      + \Big\|
      \sum_{i,j=1}^\infty \langle f_{ij}, y \rangle \frac{1}{\|b_{ij}^{(\varepsilon)}\|_{\ell^p(S^*)}} e_{k_{ij} m_{ij}}
      \Big\| _{\ell^p(S^*)},
    \end{equation*}
    where $\{(k_{ij}, \ell_{ij}),(k_{ij}, m_{ij})\} = \mathcal{B}_{ij}$, $i,j\in \mathbb{N}$.  By
    unconditionality we obtain that $\|b_{ij}^{(\varepsilon)}\|_{\ell^p(S^*)} \geq K_u^{-1}$,
    $i,j\in \mathbb{N}$, and subsequently
    \begin{equation*}
      \|By\|_{\ell^p(S^*)}
      \leq K_u^2 \Big\|
      \sum_{i,j=1}^\infty \langle f_{ij}, y \rangle  e_{k_{ij} \ell_{ij}}
      \Big\|_{\ell^p(S^*)}
      + K_u^2 \Big\|
      \sum_{i,j=1}^\infty \langle f_{ij}, y \rangle  e_{k_{ij} m_{ij}}
      \Big\| _{\ell^p(S^*)}.
    \end{equation*}
    Furthermore, by~\eqref{eq:proof:thm:primary:induction:hypothesis:b} we have that
    $\ell_{ij} < \ell_{i(j+1)}$ and $m_{ij} < m_{i(j+1)}$, $i,j\in \mathbb{N}$.  The
    weak$^*$~Schauder basis $(s_j^*)_{j=1}^\infty$ of $S^*$ is subsymmetric, hence
    \begin{equation}\label{eq:proof:thm:primary:B:estimate}
      \begin{aligned}
        \|By\|_{\ell^p(S^*)} &\leq 2 K_u^2K_s \Big\| \sum_{i,j=1}^\infty \langle f_{ij}, y \rangle
        e_{k_{ij} j} \Big\|_{\ell^p(S^*)} = 2 K_u^2K_s \Big\| \sum_{i,j=1}^\infty \langle f_{ij}, y
        \rangle e_{ij}
        \Big\|_{\ell^p(S^*)}\\
        &= 2 K_u^2K_s\| y \|_{\ell^p(S^*)}.
      \end{aligned}
    \end{equation}

    Similarly, we obtain that $\|Qy\|_{\ell^p(S^*)}$ is dominated by
    \begin{equation*}
      \Big\|
      \sum_{i,j=1}^\infty \frac{\|b_{ij}^{(\varepsilon)}\|_{\ell^p(S^*)}}{|\mathcal{B}_{ij}|}
      \langle f_{k_{ij}\ell_{ij}}, y \rangle e_{ij}
      \Big\|_{\ell^p(S^*)}
      + \Big\|
      \sum_{i,j=1}^\infty \frac{\|b_{ij}^{(\varepsilon)}\|_{\ell^p(S^*)}}{|\mathcal{B}_{ij}|}
      \langle f_{k_{ij}m_{ij}}, y \rangle e_{ij}
      \Big\| _{\ell^p(S^*)},
    \end{equation*}
    which, since $(s_j^*)_{j=1}^\infty$ is subsymmetric, gives us the following upper bound for
    $\|Qy\|_{\ell^p(S^*)}$:
    \begin{equation*}
      K_s \Big\|
      \sum_{i,j=1}^\infty \frac{\|b_{ij}^{(\varepsilon)}\|_{\ell^p(S^*)}}{|\mathcal{B}_{ij}|}
      \langle f_{k_{ij}\ell_{ij}}, y \rangle e_{i \ell_{i_j}}
      \Big\|_{\ell^p(S^*)}
      + K_s \Big\|
      \sum_{i,j=1}^\infty \frac{\|b_{ij}^{(\varepsilon)}\|_{\ell^p(S^*)}}{|\mathcal{B}_{ij}|}
      \langle f_{k_{ij}m_{ij}}, y \rangle e_{i m_{ij}}
      \Big\| _{\ell^p(S^*)}.
    \end{equation*}
    Using $\frac{\|b_{ij}^{(\varepsilon)}\|_{\ell^p(S^*)}}{|\mathcal{B}_{ij}|}\leq 1$, we obtain by
    unconditionality
    \begin{equation}\label{eq:proof:thm:primary:Q:estimate}
      \|Qy\|_{\ell^p(S^*)}
      \leq 2 K_s K_u \|y\|_{\ell^p(S^*)}.
    \end{equation}

    One can easily verify that $Q B = \Id_{\ell^p(S^*)}$, i.e.~the diagram
    \begin{equation}\label{eq:proof:thm:primary:BQ:commutative-diagram}
      \vcxymatrix{\ell^p(S^*) \ar[rr]^{\Id_{\ell^p(S^*)}} \ar[dr]_{B} & & \ell^p(S^*)\\
        & \ell^p(S^*) \ar[ru]_{Q} & }
    \end{equation}
    is commutative.  Consequently, $B$ is an isomorphism onto its range, and its range is
    complemented by $BQ$.
  \end{proofstep}

  \begin{proofstep}[Step~\theproofstep: Factorization of the identity]
    With $i\in\mathbb{N}$ fixed, observe that at least one of the two following sets is infinite:
    \begin{equation*}
      \mathcal{K}_i = \{j\in \mathbb{N} : |\langle b_{ij}^{*(\varepsilon)}, Tb_{ij}^{(\varepsilon)}\rangle| \geq 1\},
      \qquad
      \mathcal{L}_i = \{j\in \mathbb{N} :
      |\langle b_{ij}^{*(\varepsilon)}, (\Id_{\ell^p(S^*)}-T) b_{ij}^{(\varepsilon)}\rangle| \geq 1\}.
    \end{equation*}
    Thus, clearly one of the following two sets is infinite:
    \begin{equation*}
      \{ i\in\mathbb{N} : \mathcal{K}_i\ \text{is infinite}\}
      \qquad\text{or}\qquad
      \{ i\in\mathbb{N} : \mathcal{L}_i\ \text{is infinite}\}
    \end{equation*}
    If the left set is infinite, we denote it by $\mathcal{I}$ and we put
    $\mathcal{J}_i = \mathcal{K}_i$, $i\in\mathcal{I}$ as well as $H=T$; if the left set is finite,
    we denote the right set by $\mathcal{I}$, and we define $\mathcal{J}_i = \mathcal{L}_i$,
    $i\in\mathcal{I}$ as well as $H=\Id_{\ell^p(S^*)}-T$.  In either of these two cases, we obtain
    that
    \begin{equation}\label{eq:proof:thm:primary:either-or-large}
      \mathcal{I}\ \text{is infinite},
      \quad \mathcal{J}_i\ \text{is infinite},\ i\in\mathcal{I}
      \quad\text{and}\quad
      |\langle b_{ij}^{*(\varepsilon)}, Hb_{ij}^{(\varepsilon)}\rangle| \geq 1,
      \ i\in\mathcal{I}, j\in\mathcal{J}_i.
    \end{equation}
    Let $Y = B(\ell^p(S^*))$ and define $A : \ell^p(S^*)\to Y$, $y\mapsto By$.  Thus,
    by~\eqref{eq:proof:thm:primary:B:estimate},~\eqref{eq:proof:thm:primary:Q:estimate}
    and~\eqref{eq:proof:thm:primary:BQ:commutative-diagram}, the following diagram is commutative:
    \begin{equation}\label{eq:proof:thm:primary:Y:commutative-diagram}
      \vcxymatrix{
        \ell^p(S^*) \ar[rr]^{\Id_{\ell^p(S^*)}} \ar[d]_A & & \ell^p(S^*)\\
        Y \ar[rr]_{\Id_{\ell^p(S^*)}} & & Y \ar[u]_{Q_{|Y}}
      }
      \qquad \|A\| \|Q_{|Y}\| \leq 4 K_u^3 K_s^2.
    \end{equation}

    Now, put $\mathcal{K} = \{\mathcal{O}_\prec(i,j) : i\in\mathcal{I},\ j\in\mathcal{J}_i\}$ and
    define $P : \ell^p(S^*)\to Y$ by
    \begin{equation}\label{eq:proof:thm:primary:U}
      Py
      = \sum_{k\in\mathcal{K}}
      \frac{\langle b_k^{*(\varepsilon)}, y\rangle}
      {\langle b_k^{*(\varepsilon)}, H b_k^{(\varepsilon)}\rangle} b_k^{(\varepsilon)},
      \qquad y\in \ell^p(S^*).
    \end{equation}
    Recall that at the beginning of the proof we identified $b_k^{(\varepsilon)}$ with
    $b_{k_0k_1}^{(\varepsilon)}$ and $b_k^{*(\varepsilon)}$ with $b_{k_0k_1}^{*(\varepsilon)}$,
    whenever $\mathcal{O}_\prec((k_0,k_1)) = k$.  Observe that
    by~\eqref{eq:proof:thm:primary:either-or-large}, unconditionality and the definition of $B$ and
    $Q$ (see~\eqref{eq:proof:thm:primary:BQ}), we obtain that
    \begin{equation*}
      \|Py\|_{\ell^p(S^*)}
      \leq 2 K_u \|BQy\|_{\ell^p(S^*)}
    \end{equation*}
    Combining the latter estimate with~\eqref{eq:proof:thm:primary:B:estimate}
    and~\eqref{eq:proof:thm:primary:Q:estimate} yields
    \begin{equation}\label{eq:proof:thm:primary:U:estimate}
      \|Py\|_{\ell^p(S^*)}
      \leq 8 K_u^4 K_s^2 \|y\|_{\ell^p(S^*)}.
    \end{equation}

    A straightforward calculation shows that for all
    $y = \sum_{k\in\mathcal{K}} a_k b_k^{(\varepsilon)} \in Y$, the following identity is true:
    \begin{equation}\label{eq:proof:thm:primary:crucial-identity}
      PHy - y
      = \sum_{k\in \mathcal{K}}
      \sum_{\ell\in \mathcal{K} : \ell < k}
      a_\ell
      \frac{\langle b_k^{*(\varepsilon)}, H b_\ell^{(\varepsilon)}\rangle}
      {\langle b_k^{*(\varepsilon)}, Hb_k^{(\varepsilon)}\rangle}
      b_k^{(\varepsilon)}
      + \sum_{k\in \mathcal{K}}
      \frac{\big\langle H^* b_k^{*(\varepsilon)},
        \sum_{\ell\in \mathcal{K} : \ell > k} a_\ell b_\ell^{(\varepsilon)}\big\rangle}
      {\langle b_k^{*(\varepsilon)}, Hb_k^{(\varepsilon)}\rangle}
      b_k^{(\varepsilon)}.
    \end{equation}
    Since
    $|\langle b_j^{(\varepsilon)}, y\rangle| \leq \|b_j^{(\varepsilon)}\|_S \|y\|_{\ell^p(S^*)}$, we
    obtain $|a_j| \leq \|y\|_{\ell^p(S^*)}$.  Hence, by~\eqref{eq:proof:thm:primary:either-or-large}
    and the off-diagonal estimate~\eqref{eq:proof:thm:primary:induction:hypothesis:d}, we obtain
    \begin{equation}\label{eq:proof:thm:primary:U:invert:estimate:1}
      \bigg\|\sum_{k\in \mathcal{K}}
      \sum_{\ell\in \mathcal{K} : \ell < k}
      a_\ell
      \frac{\langle b_k^{*(\varepsilon)}, H b_\ell^{(\varepsilon)}\rangle}
      {\langle b_k^{*(\varepsilon)}, Hb_k^{(\varepsilon)}\rangle}
      b_k^{(\varepsilon)}
      \bigg\|_{\ell^p(S^*)}
      \leq 2\|y\|_{\ell^p(S^*)} \sum_{k\in \mathbb{N}} \eta_k.
    \end{equation}
    Using~\eqref{eq:proof:thm:primary:either-or-large},~\eqref{eq:proof:thm:primary:induction:hypothesis:b},~\eqref{eq:proof:thm:primary:induction:hypothesis:a}
    and the other off-diagonal estimate~\eqref{eq:proof:thm:primary:induction:hypothesis:e} yields
    \begin{equation}\label{eq:proof:thm:primary:U:invert:estimate:2}
      \bigg\|\sum_{k\in \mathcal{K}}
      \frac{\big\langle H^*b_k^{*(\varepsilon)}, \sum_{\ell\in \mathcal{K} : \ell > k} a_\ell b_\ell^{(\varepsilon)}\big \rangle}
      {\langle b_k^{*(\varepsilon)}, Hb_k^{(\varepsilon)}\rangle}
      b_k^{(\varepsilon)} \bigg\|_{\ell^p(S^*)}
      \leq 2 \sum_{k\in \mathcal{K}} \eta_k
      \Big\| \sum_{\ell\in \mathcal{K} : \ell > k} a_\ell b_\ell^{(\varepsilon)}\Big\|_{\ell^p(S^*)}.
    \end{equation}
    By~\eqref{eq:proof:thm:primary:BQ:a}, estimate~\eqref{eq:proof:thm:primary:B:estimate} and by
    unconditionality, we obtain
    \begin{equation*}
      \Big\| \sum_{\ell\in \mathcal{K} : \ell > k} a_\ell b_\ell^{(\varepsilon)}\Big\|_{\ell^p(S^*)}
      \leq 2 K_u^3K_s \|y\|_{\ell^p(S^*)}.
    \end{equation*}
    The latter estimate together with~\eqref{eq:proof:thm:primary:U:invert:estimate:2} yields
    \begin{equation}\label{eq:proof:thm:primary:U:invert:estimate:2-1}
      \bigg\|\sum_{k\in \mathcal{K}}
      \frac{\big\langle H^*b_k^{*(\varepsilon)}, \sum_{\ell\in \mathcal{K} : \ell > k} a_\ell b_\ell^{(\varepsilon)}\big \rangle}
      {\langle b_k^{*(\varepsilon)}, Hb_k^{(\varepsilon)}\rangle}
      b_k^{(\varepsilon)} \bigg\|_{\ell^p(S^*)}
      \leq 4 K_u^3K_s \sum_{k\in \mathcal{K}} \eta_k \|y\|_{\ell^p(S^*)}.
    \end{equation}

    Recall that we put $\eta_k = K_u^{-1}4^{-k-1}$, $k\in \mathbb{N}$,
    (see~\eqref{eq:proof:thm:primary:eta}).  Combining the
    estimates~\eqref{eq:proof:thm:primary:U:invert:estimate:1}
    and~\eqref{eq:proof:thm:primary:U:invert:estimate:2-1}
    with~\eqref{eq:proof:thm:primary:crucial-identity} yields
    \begin{equation}\label{eq:proof:thm:primary:U:invert:estimate:3}
      \|PHy - y\|_{\ell^p(S^*)} \leq \frac{1}{3} \|y\|_{\ell^p(S^*)},
      \qquad y\in Y.
    \end{equation}

    Let $J : Y\to \ell^p(S^*)$ denote the operator given by $Jy = y$, then $PHJ : Y\to Y$ is
    invertible by~\eqref{eq:proof:thm:primary:U:invert:estimate:3}.  Thus, if we define
    $V = (PHJ)^{-1}P$, the following diagram is commutative:
    \begin{equation}\label{eq:proof:thm:primary:U:commutative-diagram}
      \vcxymatrix{
        Y \ar@/^/[rr]^{\Id_Y} \ar[dd]_J & & Y\\
        & Y \ar[ru]^{(PHJ)^{-1}} &\\
        \ell^p(S^*) \ar[rr]_H & & \ell^p(S^*) \ar[lu]_{P} \ar[uu]_{V}
      }
      \qquad \|J\| \|V\|\leq 12 K_u^4 K_s^2.
    \end{equation}
    The operators $J$ and $V$ are both bounded, thus merging the
    diagrams~\eqref{eq:proof:thm:primary:Y:commutative-diagram}
    and~\eqref{eq:proof:thm:primary:U:commutative-diagram} yields the commutative diagram
    \begin{equation}\label{eq:proof:thm:primary:P:commutative-diagram}
      \vcxymatrix{
        \ell^p(S^*) \ar@/_{20pt}/[ddd]_M \ar[rr]^{\Id_{\ell^p(S^*)}} \ar[d]_A & & \ell^p(S^*)\\
        Y \ar@/^/[rr]^{\Id_Y} \ar[dd]_J & & Y \ar[u]^{Q_{|Y}}\\
        & Y \ar[ru]^{(PHJ)^{-1}} &\\
        \ell^p(S^*) \ar[rr]_H & & \ell^p(S^*) \ar[lu]_P \ar[uu]_V \ar@/_{20pt}/[uuu]_N
      }
      \qquad \|M\| \|N\|\leq 48 K_u^7 K_s^4.
    \end{equation}
  \end{proofstep}

  \begin{proofstep}[Step~\theproofstep: $\ell^p(S^*)$ is primary]
    Let $Q : \ell^p(S^*)\to \ell^p(S^*)$ denote a bounded projection.  Then
    by~\eqref{eq:thm:primary}, we know that $\ell^p(S^*)$ is either isomorphic to a complemented
    subspace of $Q(\ell^p(S^*))$, or isomorphic to a complemented subspace of
    $(\Id_{\ell^p(S^*)} - Q)(\ell^p(S^*))$.  Moreover, since $\ell^p(S^*)$ is the $\ell^p$-sum of a
    Banach space, $\ell^p(S^*)$ is isomorphic to $\ell^p(\ell^p(S^*))$.  Thus, by
    Pe{\l}czy{\'n}ski's decomposition method (see~\cite{pelczynski:1960}; see
    also~\cite[II.B.24]{wojtaszczyk:1991}), we obtain that either $Q(\ell^p(S^*))$ is isomorphic to
    $\ell^p(S^*)$, or $(\Id_{\ell^p(S^*)} - Q)(\ell^p(S^*))$ is isomorphic to $\ell^p(S^*)$.\qedhere
  \end{proofstep}
\end{myproof}

\subsection*{Acknowledgments}\hfill\\
\noindent
It is my pleasure to thank P.F.X.~Müller for many helpful discussions.  Supported by the Austrian
Science Foundation (FWF) Pr.Nr. P28352.

\bibliographystyle{abbrv}
\bibliography{bibliography}

\end{document}